\newcommand{\subjclass}{2000AMS {\it Subject Classification }}
\newcommand{\br}{\mathbb R}
\newcommand{\bn}{\mathbb N}
\newcommand{\tow}{\rightharpoonup}
\numberwithin{equation}{section}
\newtheorem{theorem}{Theorem}[section]
\newtheorem{lemma}[theorem]{Lemma}
\newtheorem{corollary}[theorem]{Corollary}
\newtheorem{proposition}[theorem]{Proposition}
\title{ Existence,  Uniqueness,  and Convergence of optimal control problems
 associated with Parabolic variational inequalities of the second kind}
\author{\it \bf{Mahdi Boukrouche}\thanks{Lyon University, F-42023 Saint-Etienne,
Laboratory of Mathematics, University of
 Saint-Etienne, LaMUSE EA-3989,
23 Docteur Paul Michelon
42023 Saint-Etienne Cedex 2, France. Fax: +33 4 77 48  51 53, Phone +33 4 77 48 15 35,
  E-mail: Mahdi.Boukrouche@univ-st-etienne.fr.} \and
{\bf Domingo A. Tarzia}\thanks{Departamento de Matem\'atica-CONICET, FCE,
Univ. Austral, Paraguay 1950, S2000FZF Rosario, Argentina. Fax: +54 341 522 3001, Phone +54 341 522 3093,
E-mail: DTarzia@austral.edu.ar}
}
\date{}
\begin{document}
\maketitle
\normalsize
\begin{abstract}
Let $u_{g}$  the unique solution of a parabolic variational  inequality  of  second kind,
with a given  $g$.
Using a regularization method, we prove,  for all  $g_{1}$ and $g_{2}$,   a monotony property  between
$\mu u_{g_{1}} + (1-\mu)u_{g_{2}}$  and $u_{\mu g_{1} + (1-\mu)g_{2}}$ for $\mu \in [0 , 1]$.
This  allowed us to prove the existence and uniqueness results to
a family of optimal control  problems over  $g$ for each heat transfer coefficient  $h>0$, associated to the  Newton law, and of another  optimal control problem associated to a Dirichlet boundary  condition.
 We prove also , when $h\to +\infty$, the strong  convergence
of the  optimal controls and states associated to this  family of optimal control problems with the  Newton law
 to that of the optimal control problem associated to a Dirichlet boundary  condition.

\smallskip
{\bf Keywords:}
Parabolic variational inequalities of the second kind,  convex combination of solutions,  monotony property,
 regularization method, dependency of the solutions on the data, strict convexity of cost functional,
 optimal control problems.

\subjclass{35R35,  35B37,  35K85, 49J20, 49K20.}

\smallskip
{\bf Short title :} Controls for parabolic variational inequalities
\end{abstract}

\section{Introduction}\label{intro}
Let  consider the following problem governed by the parabolic variational inequality
\begin{equation}\label{eq1}
\langle \dot{u}(t) \,, \, v-u(t)\rangle + a(u(t) \, ,\, v-u(t)) + \Phi(v) - \Phi(u(t)) \geq  <g(t) \,,\, v- u(t) >  \quad
\forall v\in K,
\end{equation}
a.e. $t\in ]0 , T[$, with the initial condition
\begin{equation}\label{ic1}
u(0)= u_{b},
\end{equation}
where,  $a$ is  a symmetric continuous and coercive bilinear form on the Hilbert space $V\times V$, $\Phi$  is a proper and convex function from $V$ into $\mathbb{R}$ and is lower semi-continuous for the weak topology on $V$,
$< \cdot , \cdot>$ denotes the  duality brackets between $V'$ and $V$,
 $K$ is  a closed convex non-empty  subset of $V$, $u_{b}$ is an initial value in another Hilbert space $H$ with $V$ being  densely and continuously  imbedded in $H$, and  $g$ is a given function in the space $L^{2}(0 , T , V')$.
It is well known \cite{Brezis68, Brezis72, Chipot2000, DL1972} that, there exists  a unique solution
$$u\in {\cal C}(0 , T , H)\cap L^{2}(0 , T , V)  \quad {\rm with }
\quad \dot{u}= \dfrac{\partial u}{\partial t}\in L^{2}(0 , T , H)
$$
 to (\ref{eq1})-(\ref{ic1}). So we can consider $g \mapsto u_{g}$  as a function from
$L^{2}(0 , T , H)$ to ${\cal C}(0 , T , H)\cap L^{2}(0 , T , V)$. Then we can consider \cite{KeMu2008, JLL, NST2006}
 the cost functional $J$ defined by
\begin{eqnarray}\label{e4.1}
 J(g)= {1\over 2}\|u_{g}\|^{2}_{L^{2}(0 , T , H)} + {M\over 2}\|g\|^{2}_{L^{2}(0 , T , H)},
\end{eqnarray}
where $M$ is a positive constant, and $u_{g}$ is the unique solution to (\ref{eq1})-(\ref{ic1}), corresponding to the control $g$.
One of our  main purposes is to prove the existence and uniqueness of the optimal control problem
\begin{equation}\label{P}
 \mbox{Find }  g_{op}\in L^{2}(0 , T, H) \quad\mbox{such that}
\quad J(g_{op})= \min_{g\in L^{2}(0 , T, H)} J(g).
\end{equation}
This can be reached if we prove  the strictly convexity of the cost functional $J$, which  follows
(see Theorem \ref{th4.2}) from
 the following  monotony property : {\it for any two control $g_{1}$ and $g_{2}$ in $L^{2}(0 , T , H)$,  }
\begin{eqnarray}\label{mopr}
u_{4}(\mu)\leq u_{3}(\mu)   \qquad \forall \mu\in [0 , 1],
\qquad\label{u3}
\end{eqnarray}
where
\begin{eqnarray}\label{u34}
u_{3}(\mu)= \mu u_{1}+ (1-\mu)u_{2},  \qquad u_{4}(\mu)= u_{g_{3}(\mu)}, \quad {\rm with} \quad g_{3}(\mu)= \mu g_{1} + (1-\mu)g_{2}.
\end{eqnarray}
In Section {\rm\ref{sec-3}}, we   establish first in Theorem \ref{th1}, the error
estimate between  $u_{3}(\mu)$ and $u_{4}(\mu)$.
This result   generalizes our previous result obtained in \cite{MB-DT1} for the elliptic variational inequalities.
 We deduce  in  Corollary \ref{cor22}  a condition on the data to get
                      $u_{3}(\mu)= u_{4}(\mu)$ for all $\mu\in [0 , 1]$.
Then we assume,   that the convex  $K$ is a subset of $V=H^{1}(\Omega)$ and consider the parabolic variational problems ($P$) and  ($P_{h}$).
So, using a regularization method, we prove  in Theorem \ref{la} this  monotony property {\rm(\ref{mopr})},   for the solutions of the two problems
 ($P$) and  ($P_{h}$).
 This result with a new   proof and simplified, generalizes  that  obtained by \cite{Mingot1} for elliptic variational inequalities.
In  Subsection \ref{sub-3.2} we also obtain some  properties of dependency solutions based on the data
$g$ and on a positive parameter $h$ for the parabolic variational inequalities   (\ref{eq1}) and  (\ref{iv2}),
  see Propositions \ref{l2.3}, \ref{l2.3h} and \ref{l2.3h2}.

In Section \ref{secOPC},  we consider  the family of distributed optimal control problems $(P_{h})_{h>0}$,
\begin{equation}\label{Ph}
 \mbox{Find }  g_{op_{h}}\in L^{2}(0 , T , H) \quad \mbox{such that }
    \quad J(g_{op_{h}})= \min_{g\in L^{2}(0 , T , H)} J_{h}(g),
\end{equation}
 with the cost functional
\begin{equation}\label{Jh}
 J_{h}(g)= {1\over 2}\|u_{g_{h}} \|_{L^{2}(0 , T , H)}^{2}+ {M\over 2}\|g\|_{L^{2}(0 , T , H)}^{2},
\end{equation}
where $u_{g_{h}}$ is the unique solution of  (\ref{iv2})-(\ref{ic1}), corresponding to the control $g$ for each $h>0$,
and  the distributed optimal control problems
\begin{equation}\label{P}
 \mbox{Find }  g_{op}\in L^{2}(0 , T , H) \quad \mbox{such that }
    \quad J(g_{op})= \min_{g\in L^{2}(0 , T , H)} J(g),
\end{equation}
 with the cost functional (\ref{e4.1}) where
$u_{g}$ is the unique solution to (\ref{eq1})-(\ref{ic1}), corresponding to the control $g$.
Using  Theorem \ref{la} with its crucial property of monotony (\ref{u3}),
we prove the strict convexity  of  the cost functional  (\ref{e4.1}) and also of the cost functional (\ref{Jh}),
associated to the  problems (\ref{P}) and (\ref{Ph}) respectively.
Then, the existence and uniqueness of solutions to the optimal controls problems (\ref{P}) and (\ref{Ph}) follows from \cite{JLL}.

In general see for example \cite{CaJa1959}
 the  relevant  physical condition, to impose on the boundary,   is Newton's  law,  or  Robin's law,  and not Dirichlet's.
Therefore, the objective of this work is to approximate the optimal control problem (\ref{P}), where the state is the solution to parabolic
variational problem  (\ref{eq1})-(\ref{ic1}) associated with the Dirichlet condition (\ref{pbc1}),
 by a family indexed by a factor $h$ of optimal control problems (\ref{iv2})-(\ref{ic1}),
where states are the solutions to parabolic variational problems,
associated with the boundary condition of Newton  (\ref{pbc3}).
 Moreover, from a numerical analysis point of view
it maybe preferable to consider approximating Neumann problems in
all space $V$ (see (\ref{iv2})-(\ref{ic1})), with
parameter $h$, rather than the Dirichlet problem in a subset of
the space $V$ (see (\ref{eq1})-(\ref{ic1})).
So the asymptotic behavior can be considered very important in the optimal control.

In the last subsection \ref{secLim},  which is also the goal of our paper,  we prove  that the optimal control $g_{op_{h}}$
 (unique solution of the optimization problem (\ref{Ph}))
 and its corresponding state  $u_{g_{op_{h}h}}$ (the unique solution of the parabolic variational problem
(\ref{iv2})-(\ref{ic1}))  for each $h>1$, are strongly convergent  to $g_{op}$ (the unique solution of the optimization
 problem (\ref{P})), and $u_{g_{op}}$ (the unique solution of the parabolic variational problem (\ref{eq1})-(\ref{ic1}))
 in $L^{2}([0 , T]\times\Omega)$ and $L^{2}(0 , T , H^{1}(\Omega))$ respectively when $h\to +\infty$.

 This paper  generalizes the results obtained in \cite{GT}, for  elliptic variational equalities,   and
 in \cite{MT} for parabolic variational  equalities, to  the case of parabolic variational inequalities of second kind.
Various problems with distributed optimal control,  associated with   elliptic variational inequalities are given see for example
\cite{A2006, Barbu84a}, \cite{B1997a}-\cite{BM2000}, \cite{Ca2000,  IK2000},
 \cite{Mingot1}-\cite{NPS2006}, \cite{YC2004} and for the parabolic case see for example
\cite{Amassad2008, Barbu84a, Barbu84c}, \cite{BerMer2000}-\cite{Bonas1986}, \cite{NT88, NT},
\cite{pironneau1984}.

\section{On  the  property of monotony}\label{sec-3}
As we can not prove the  property of monotony {\rm(\ref{mopr})}  for any convex set $K$.
Let $\Omega$  a bounded open set in $\br^{N}$ with smooth boundary  $\partial\Omega=\Gamma_{1}\cup\Gamma_{2}$.
 We assume  that   $\Gamma_{1}\cap\Gamma_{2}=\O$, and  $meas(\Gamma_{1})>0$. Let $H= L^{2}(\Omega)$, $V= H^{1}(\Omega)$.
We can prove  the  property of monotony {\rm(\ref{mopr})} for any convex subset of $V$.
Let
$$K=\{v\in V : \quad v_{|\Gamma_{1}}=0\},  \quad and \quad  K_{b}=\{v\in V : \quad v_{|\Gamma_{1}}= b\}. $$
So we consider the  following variational  problems with such convex  subset.

\noindent{\bf Problem ($P$)}
Let given  $b\in L^{2}(]0 , T[\times\Gamma_{1})$,  $g\in L^{2}(0 , T , H)$ and $q\in  L^{2}(]0 , T[\times \Gamma_{2})$,  $q>0$.
 Find    $u$ in  ${\cal C}([0 , T] , H)\cap  L^{2}(0 , T, K_{b})$ solution of
 the  parabolic problem (\ref{eq1}), where  $< \cdot , \cdot >$
is only the scalar product $( \cdot , \cdot )$ in $H$,  with the initial condition  (\ref{ic1}),
and $\Phi(v)=\int_{\Gamma_{2}} q|v| ds.$

\smallskip
\noindent{\bf Problem ($P_{h}$)}
Let given  $b\in L^{2}(]0 , T[\times\Gamma_{1})$,  $g\in L^{2}(0 , T , H)$  and $q\in  L^{2}(]0 , T[\times \Gamma_{2})$, $q>0$.
For all  coefficient $h>0$, find  $u\in {\cal C}(0 , T, H)\cap L^{2}(0 , T , V)$ solution of
  the  parabolic variational inequality
\begin{eqnarray}\label{iv2}
\langle \dot{u}(t) \, , \,  v-u(t)\rangle +  a_{h}(u(t) \, ,\,  v-u(t))
+ \Phi(v)- \Phi(u(t))\geq
( g(t) , v-u(t)) \nonumber\\
+ h\int_{\Gamma_{1}}b(t)(v-u(t))ds\quad \forall v\in V,
\end{eqnarray}
and the initial condition  {\rm(\ref{ic1})},
where
$
a_{h}(u , v)= a(u , v)+ h\int_{\Gamma_{1}}u  v ds.
$

It is easy to see that the problem ($P$) is with the Dirichlet condition
\begin{eqnarray}
  u= b \quad on \quad\Gamma_{1}\times]0 , T[, \label{pbc1}
\end{eqnarray}
and the problem ($P_{h}$) is with the following Newton-Robin's type  condition
\begin{eqnarray}
 -\dfrac{\partial u}{ \partial n}= h(u-b)\quad on \quad
    \Gamma_{1}\times]0 , T[. \label{pbc3}
\end{eqnarray}
 where $n$ is the exterior unit vector normal to the boundary. The integal on $\Gamma_{2}$ in the expression
of  $\Phi$ comes from  the Tresca  boundary condition (see
 \cite{mb-Ls1}-\cite{sf1},\cite{DL1972})  with   $q$ is  the Tresca friction coefficient  on $\Gamma_{2}$.
Note that only for the proof of  Theorem \ref{la}   we have need to specify an expression of the functional  $\Phi$.

By assumption there exists  $\lambda >0$ such that
$
\lambda \|v\|_{V}^{2}\leq a( v \, , \, v)\quad \forall v\in V$.
Moreover, it follows from  \cite{DT, Ta1979} that there exists $\lambda_{1}>0$ such that
$$ a_{h}(v , v)\geq \lambda_{h}\|v\|_{V}^{2} \quad \forall v\in V,
\quad \mbox{ with }\lambda_{h}= \lambda_{1}\min\{1  \,, \, h\}
$$
so  $a_{h}$ is a bilinear,  continuous, symmetric and coercive form on $V$. So there exists an unique solution to each of the two problems
 ($P$) and ($P_{h}$).

We recall  that $u_{g}$ is the unique solution of the parabolic variational problem ($P$),
corresponding to the control  $g\in L^{2}(0 , T , H)$, and also that  $u_{g_{h}}$ is
the unique solution of the  parabolic variational problem ($P_{h}$),
corresponding to the control  $g\in L^{2}(0 , T , H)$.

\begin{proposition}\label{r2.3.2}
Assume that  $g\geq 0$ in $\Omega\times ]0 , T[$, $b\geq 0$ on $\Gamma_{1}\times ]0 , T[$,
 $u_{b}\geq 0$ in $\Omega$.
Then as $q>0$, we have $u_{g}\geq 0$. Assuming again that  $h>0$, then  $u_{g_{h}}\geq 0$ in $\Omega\times ]0 , T[$.
\end{proposition}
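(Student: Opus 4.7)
The plan is to test the variational inequalities with $v=u^+$ so that $v-u=u^-$; the positivity hypotheses on $u_b$, $b$, $g$, $q$ should then force $u^-\equiv 0$. First I check admissibility. For Problem~$(P)$, the trace $u(t)|_{\Gamma_1}=b(t)\geq 0$ gives $u^+(t)|_{\Gamma_1}=(b(t))^+=b(t)$, hence $u^+(t)\in K_{b}$ for a.e.\ $t$; for Problem~$(P_h)$ every $v\in V$ is admissible, so $v=u^+\in V$ is certainly fine.

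Next I use four pointwise identities. From $u=u^+-u^-$, $u^+u^-\equiv 0$, and $\nabla u^+\cdot\nabla u^-\equiv 0$ a.e.\ in $\Omega$, I obtain $a(u,u^-)=-a(u^-,u^-)\leq -\lambda\|u^-\|_V^2$, and, for the Newton form, $a_h(u,u^-) = -a(u^-,u^-) - h\int_{\Gamma_1}(u^-)^2\,ds$. Since $q>0$,
\[
\Phi(u^+)-\Phi(u) \;=\; \int_{\Gamma_2} q\bigl(u^+-|u|\bigr)\,ds \;=\; -\int_{\Gamma_2} q\,u^-\,ds \;\leq\; 0.
\]
Finally, the standard chain rule available for $u\in L^2(0,T,V)$ with $\dot u\in L^2(0,T,H)$ yields
\[
\int_0^t \langle \dot u(\tau),\, u^-(\tau)\rangle\,d\tau \;=\; -\tfrac12\|u^-(t)\|_H^2 + \tfrac12\|u_b^-\|_H^2 \;=\; -\tfrac12\|u^-(t)\|_H^2,
\]
the last equality using $u_b\geq 0$.

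Plugging $v=u^+$ into (\ref{eq1}) and integrating on $(0,t)$, these identities yield
\[
-\tfrac12\|u^-(t)\|_H^2 \,-\, \int_0^t a(u^-,u^-)\,d\tau \,-\, \int_0^t\!\!\int_{\Gamma_2} q\,u^-\,ds\,d\tau \;\geq\; \int_0^t (g,u^-)\,d\tau \;\geq\; 0.
\]
Every summand on the left is non-positive, so each must vanish; in particular $\|u^-(t)\|_H=0$ for all $t$, i.e.\ $u_g\geq 0$. For Problem~$(P_h)$ the same choice $v=u^+$ in (\ref{iv2}) contributes the extra non-positive term $-h\int_0^t\!\int_{\Gamma_1}(u^-)^2\,ds\,d\tau$ on the left and the non-negative term $h\int_0^t\!\int_{\Gamma_1} b\,u^-\,ds\,d\tau$ on the right, and the same conclusion $u_{g_h}\geq 0$ follows verbatim. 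The only point requiring care is the chain rule for $t\mapsto\|u^-(t)\|_H^2$ at the given regularity; this is standard, obtained by smooth convex approximation of $s\mapsto(s^-)^2$ and passage to the limit.
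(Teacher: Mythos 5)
Your proof is correct and follows essentially the same route as the paper: testing with $v=u^{+}$ (so that $v-u=u^{-}$), using $a(u,u^{-})=-a(u^{-},u^{-})$, the sign of $\Phi(u^{+})-\Phi(u)=-\int_{\Gamma_{2}}qu^{-}ds$, and the hypotheses $g\geq 0$, $b\geq 0$, $u_{b}\geq 0$ to force $u^{-}\equiv 0$, exactly as in the paper's inequality (\ref{pmx}). You merely supply details the paper leaves implicit (admissibility of $u^{+}$ in $K_{b}$ for Problem $(P)$, and the chain rule for $t\mapsto\|u^{-}(t)\|_{H}^{2}$), which is fine.
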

 \begin{proof} For $u=u_{g_{h}}$, it is enough to take $v=u^{+}$   in {\rm(\ref{iv2})},    to get
\begin{eqnarray}\label{pmx}
\|u^{-}(T)\|^{2}_{L^{2}(\Omega)} +  \lambda \int_{0}^{T} \|u^{-}(t)\|_{V}^{2} dt
+  h\int_{0}^{T}\int_{\Gamma_{1}}(u^{-}(t))^{2}ds  dt
+     \leq - \int_{0}^{T}( g(t) , u^{-}(t)) dt  \nonumber\\
 -\int_{0}^{T}\int_{\Gamma_{2}}q (|u(t)|-|u^{+}(t)|)dsdt
-h\int_{0}^{T}\int_{\Gamma_{1}}b(t) u^{-}(t) ds dt + \|u^{-}(0)\|^{2}_{L^{2}(\Omega)} \quad
\end{eqnarray}
so the result follows. 
\end{proof}

\begin{theorem}\label{th1}
Let $u_{1}$ and $u_{2}$ be two  solutions of the parabolic variational inequality  {\rm(\ref{eq1})}
with the same initial condition,   and corresponding to the two control   $g_{1}$ and $g_{2}$ respectively. We have the following estimate
\begin{eqnarray*}\label{eq3.1}
 {1\over 2}\|u_{4}(\mu) -u_{3}(\mu)\|^{2}_{L^{\infty}(0 , T, H)}
 +   \lambda \|u_{4}(\mu) -u_{3}(\mu)\|^{2}_{L^{2}(0 , T, V)}
+ \mu {\cal I}_{14}(\mu)(T) + (1-\mu) {\cal I}_{24}(\mu)(T)
\nonumber\\
+\mu\Phi(u_{1}) + (1-\mu)\Phi(u_{2})- \Phi(u_{3}(\mu))
\leq \mu(1-\mu)({\cal A}(T , g_{1}) + {\cal B}(T, g_{2})) \quad \forall \mu\in [0  , 1],
\end{eqnarray*}
where
$${\cal  I}_{j4}(\mu)(T) =  \int_{0}^{T} I_{j4}(\mu)(t) dt \quad{\rm for \,} j=1 , 2,
\quad
{\cal A}(T, g_{1}) = \int_{0}^{T}\alpha(t) dt, \quad {\cal B}(T, g_{2}) = \int_{0}^{T}\beta (t) dt,$$
\begin{eqnarray*}
I_{j4}(\mu)= \langle  \dot{u}_{j} \,, \, u_{4}(\mu)  -u_{j} \rangle + a(u_{j}\, , \, u_{4}(\mu) - u_{j}) + \Phi(u_{4}(\mu)) -\Phi(u_{j})
-\langle g_{j} , u_{4}(\mu) - u_{j}\rangle \geq 0,
\end{eqnarray*}
\begin{gather}\label{alp}
\alpha = \langle \dot{u}_{1} \,, \, u_{2}  -u_{1} \rangle+ a(u_{1}\, , \, u_{2} - u_{1} )+ \Phi(u_{2}) -\Phi(u_{1})
-\langle g_{1} , u_{2} - u_{1}\rangle  \geq 0, \\
\label{bet}
\beta =  \langle   \dot{u}_{2} \,, \, u_{1}  -u_{2}\rangle + a(u_{2}\, , \, u_{1} - u_{2} )+ \Phi(u_{1}) -\Phi(u_{2})
 -\langle g_{2} , u_{1} - u_{2}\rangle \geq 0.
\end{gather}
\end{theorem}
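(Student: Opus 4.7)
The plan is to combine three variational inequalities---one each for $u_1$, $u_2$, and $u_4(\mu)$---and exploit the elementary identity
\[
\mu q(u_1) + (1-\mu) q(u_2) - q(u_3(\mu)) = \mu(1-\mu)\, q(u_1 - u_2),
\]
valid for every quadratic form $q$, to extract the energy estimate. First, inserting $v = u_4(\mu)$ into (\ref{eq1}) for $u_1$ and $u_2$ yields by definition $I_{14}(\mu)\geq 0$ and $I_{24}(\mu)\geq 0$. Second, since $K$ is convex and $u_1(t),u_2(t)\in K$ a.e., we have $u_3(\mu)(t)\in K$, so $v=u_3(\mu)$ is admissible in (\ref{eq1}) for $u_4(\mu)$; call the resulting nonnegative quantity $I_{43}$. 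We then form the nonnegative combination $\mu I_{14}+(1-\mu)I_{24}+I_{43}$.

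Expanding this combination piece by piece: the $a$--part, using symmetry and the quadratic identity applied to $q=a(\cdot,\cdot)$, collapses to $-a(u_4-u_3,u_4-u_3)-\mu(1-\mu)\,a(u_1-u_2,u_1-u_2)$. The time-derivative part, via the chain rule $\tfrac{d}{dt}\|w\|_H^2=2\langle\dot w,w\rangle$ together with the quadratic identity for the $H$-inner product, collapses to $-\tfrac12\tfrac{d}{dt}\|u_4-u_3\|_H^2-\tfrac{\mu(1-\mu)}{2}\tfrac{d}{dt}\|u_1-u_2\|_H^2$. The $\Phi$--part telescopes to $\Phi(u_3)-\mu\Phi(u_1)-(1-\mu)\Phi(u_2)$, nonpositive by convexity of $\Phi$. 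Finally, the source part simplifies, via $u_1-u_3=(1-\mu)(u_1-u_2)$ and $u_2-u_3=-\mu(u_1-u_2)$, to $\mu(1-\mu)\,\langle g_1-g_2,u_1-u_2\rangle$.

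The decisive algebraic step comes next: from (\ref{alp})--(\ref{bet}), one checks directly (the $\Phi$ contributions in $\alpha$ and $\beta$ cancel pairwise) that
\[
\alpha+\beta = -\tfrac12\tfrac{d}{dt}\|u_1-u_2\|_H^2 - a(u_1-u_2,u_1-u_2) + \langle g_1-g_2,\,u_1-u_2\rangle.
\]
Substituting this into the expansion above, the three $(u_1-u_2)$--pieces collected in the previous paragraph, each carrying the common factor $\mu(1-\mu)$, reassemble into exactly $\mu(1-\mu)(\alpha+\beta)$. This yields the clean pointwise \emph{identity}
\begin{multline*}
\tfrac12\tfrac{d}{dt}\|u_4-u_3\|_H^2 + a(u_4-u_3,u_4-u_3) + \bigl[\mu\Phi(u_1)+(1-\mu)\Phi(u_2)-\Phi(u_3)\bigr] \\
+ \mu I_{14} + (1-\mu) I_{24} + I_{43} = \mu(1-\mu)(\alpha+\beta),
\end{multline*}
in which every term on the left is nonnegative.

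It remains to integrate on $[0,\tau]$ for $\tau\in[0,T]$, use the common initial condition $u_4(0)=u_3(0)=u_b$ to eliminate the boundary term, drop the nonnegative integral of $I_{43}$, and apply coercivity $a(v,v)\geq\lambda\|v\|_V^2$. Since $\mu(1-\mu)\int_0^\tau(\alpha+\beta)\,ds$ is nondecreasing in $\tau$, taking $\tau=T$ produces the bounds on $\|u_4-u_3\|_{L^2(0,T,V)}^2$, on $\mathcal I_{j4}(T)$, and on the $\Phi$--convexity defect, while the inequality $\tfrac12\|u_4(\tau)-u_3(\tau)\|_H^2\leq\mu(1-\mu)(\mathcal A(T,g_1)+\mathcal B(T,g_2))$ valid for every $\tau$ delivers the $L^\infty(0,T,H)$ bound. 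I expect the main obstacle to be the algebraic bookkeeping in the expansion step and, in particular, recognising that the residual $(u_1-u_2)$--terms reassemble precisely into $\alpha+\beta$; once that identification is in place, the remaining integration and coercivity arguments are routine.
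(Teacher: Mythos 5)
Your proof is correct and follows essentially the same route as the paper: testing the inequality for $u_{4}(\mu)$ with $v=u_{3}(\mu)$ (your $I_{43}\geq 0$) and decomposing the resulting right-hand side, via $u_{3}(\mu)=\mu u_{1}+(1-\mu)u_{2}$ and $g_{3}(\mu)=\mu g_{1}+(1-\mu)g_{2}$, into $\mu(1-\mu)(\alpha+\beta)$ minus $\mu I_{14}+(1-\mu)I_{24}$ and the convexity defect of $\Phi$, before integrating and using coercivity. Your only departure is presentational---you keep $I_{43}$ explicit and record an exact pointwise identity before dropping it---which matches the paper's computation term for term.
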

\begin{proof}
   As $u_{3}(\mu)(t) \in K$ so with $v= u_{3}(\mu)(t)$, in the variational inequality (\ref{eq1}) where $u=u_{4}(\mu)$ and $g=g_{3}(\mu)$,   we obtain
\begin{eqnarray*}
 \langle  \dot{u}_{4}(\mu) \,, \, u_{3}(\mu) -u_{4}(\mu) \rangle +      a(u_{4}(\mu) \, , \, u_{3}(\mu)-u_{4}(\mu))
 + \Phi(u_{3}(\mu)) -\Phi(u_{4}(\mu))\nonumber\\
\geq\langle g_{3}(\mu) , u_{3}(\mu)- u_{4}(\mu)\rangle \quad  a.e. \, t\in ]0 , T[,
\end{eqnarray*}
then
\begin{eqnarray*}
 \langle  \dot{u}_{4}(\mu) - \dot{u}_{3}(\mu) \,, \, u_{4}(\mu) -u_{3}(\mu) \rangle + a(u_{4}(\mu)- u_{3}(\mu) \, , \, u_{4}(\mu)-u_{3}(\mu))  \qquad
\nonumber\\
\leq  \langle  \dot{u}_{3}(\mu) \,, \, u_{3}(\mu) -u_{4}(\mu) \rangle +
 a(u_{3}(\mu) \, , \, u_{3}(\mu)-u_{4}(\mu))\nonumber\\
+ \Phi(u_{3}(\mu)) -\Phi(u_{4}(\mu)(t))
 -\langle g_{3}(\mu) , u_{3}(\mu)- u_{4}(\mu)\rangle \quad  a.e. \, t\in ]0 , T[,
\end{eqnarray*}
thus
\begin{eqnarray*}
 {1\over 2}{\partial \over\partial t}\left(\|u_{4}(\mu) -u_{3}(\mu)\|^{2}_{H}\right) +    \lambda\|u_{4}(\mu)- u_{3}(\mu)\|_{V}^{2}
\leq
\langle  \dot{u}_{3}(\mu) \,, \, u_{3}(\mu) -u_{4}(\mu) \rangle \nonumber\\
+ a(u_{3}(\mu) \, , \, u_{3}(\mu)-u_{4}(\mu))+ \Phi(u_{3}(\mu)) -\Phi(u_{4}(\mu))
\nonumber\\-
\langle g_{3}(\mu) \, ,\,  u_{3}(\mu)- u_{4}(\mu)\rangle, \quad  a.e. \, t\in ]0 , T[,
\end{eqnarray*}
using that $u_{3}(\mu)=\mu(u_{1} -u_{2})+  u_{2}$, $g_{3}(\mu)=\mu(g_{1} -g_{2})+  g_{2}$
we get
\begin{eqnarray*}
 {1\over 2}{\partial \over\partial t}\left(\|u_{4}(\mu) -u_{3}(\mu)\|^{2}_{H}\right) +    \lambda\|u_{4}(\mu)- u_{3}(\mu)\|_{V}^{2} +\mu\Phi(u_{1}) + (1-\mu)\Phi(u_{2})- \Phi(u_{3}(\mu)
 \nonumber\\ \leq
\mu(1-\mu)(\alpha + \beta)- \mu I_{1 4}(\mu) - (1-\mu) I_{2 4}(\mu) \quad  a.e. \, t\in ]0 , T[,
\end{eqnarray*}
so by integration  between  $t=0$ and $t=T$, we deduce the required result.
\end{proof}

\begin{corollary}\label{cor22} From Theorem \rm{\ref{th1}} we get $a.e.  \, t\in [0 , T]$
\begin{eqnarray*}
{\cal A}(T,g_{1})={\cal B}(T , g_{2})= 0 \Rightarrow
\left\{
\begin{array}{ll}
&u_{3}(\mu) =u_{4}(\mu)  \qquad \forall \mu\in [0 , 1],
\\
&I_{14}(\mu)=  I_{24}(\mu)=0 \qquad \forall \mu\in [0 , 1],
\\
& \Phi(u_{3}(\mu))=\mu\Phi(u_{1}) + (1-\mu)\Phi(u_{2})\qquad \forall \mu\in [0 , 1]. 
 \end{array}
\right.
\end{eqnarray*}
\end{corollary}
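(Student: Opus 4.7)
The plan is to read Corollary~\ref{cor22} as a direct consequence of Theorem~\ref{th1}, exploiting that each individual summand on the left-hand side of the estimate is non-negative. When $\mathcal{A}(T,g_1) = \mathcal{B}(T,g_2) = 0$, the right-hand side $\mu(1-\mu)(\mathcal{A}(T,g_1)+\mathcal{B}(T,g_2))$ vanishes identically for every $\mu\in[0,1]$. So the forward direction reduces to arguing that a sum of non-negative quantities that equals zero must be zero term by term.

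The first sub-step is to check non-negativity of the five contributions on the left. The two squared-norm terms are clearly $\geq 0$. The integrands $I_{j4}(\mu)(t)$ are non-negative because they are exactly what one obtains by inserting the admissible test function $v = u_4(\mu)(t)\in K_b$ into the variational inequality (\ref{eq1}) satisfied by $u_j$, so the sign is built into the formulation; this is also recorded explicitly in the statement of Theorem~\ref{th1}. Finally, the quantity $\mu\Phi(u_1) + (1-\mu)\Phi(u_2) - \Phi(u_3(\mu))$ is non-negative by the convexity of $\Phi$ on $V$ (pointwise in $t$, hence after time integration).

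The second sub-step is to extract the three conclusions. From $\tfrac12\|u_4(\mu)-u_3(\mu)\|_{L^\infty(0,T;H)}^2 = 0$ we directly get $u_3(\mu) = u_4(\mu)$. For $\mu\in(0,1)$, the positive coefficients $\mu$ and $1-\mu$ in front of $\mathcal{I}_{14}(\mu)(T)$ and $\mathcal{I}_{24}(\mu)(T)$ let us divide through, and then non-negativity of each integrand forces $I_{14}(\mu)(t) = I_{24}(\mu)(t) = 0$ a.e.\ in $t$. The convexity gap term likewise integrates to zero, so by pointwise non-negativity $\Phi(u_3(\mu)(t)) = \mu\Phi(u_1(t)) + (1-\mu)\Phi(u_2(t))$ a.e.\ in $t$.

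The only mildly delicate point, and really the only thing to mind, is the boundary values $\mu\in\{0,1\}$ where the coefficients in front of $\mathcal{I}_{j4}$ degenerate. These are handled by inspection: at $\mu = 0$ we have $u_3(0) = u_2$ and $g_3(0) = g_2$, so uniqueness for Problem~($P$) gives $u_4(0) = u_2 = u_3(0)$, and then $\mathcal{I}_{14}(0)(T) = \mathcal{A}(T,g_1) = 0$ by definition and $\mathcal{I}_{24}(0)(T) = 0$ because the integrand vanishes identically; the identity for $\Phi$ is trivial. The case $\mu = 1$ is symmetric. I do not anticipate any genuine obstacle here, since Theorem~\ref{th1} has already done all the analytic work; the corollary amounts to an observation about the sign of each term.
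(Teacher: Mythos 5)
Your proof is correct and is essentially the argument the paper intends: Corollary~\ref{cor22} is stated as an immediate consequence of Theorem~\ref{th1}, and your reasoning---when ${\cal A}(T,g_{1})={\cal B}(T,g_{2})=0$ the right-hand side of the estimate vanishes for every $\mu$, so each of the non-negative terms on the left (the squared norms, $\mu{\cal I}_{14}(\mu)(T)$, $(1-\mu){\cal I}_{24}(\mu)(T)$, and the convexity gap of $\Phi$) must vanish, with pointwise a.e.\ conclusions following from the sign of the integrands---is exactly the implicit proof. Your explicit handling of the endpoints $\mu\in\{0,1\}$ (via uniqueness for $g_{3}(0)=g_{2}$, $g_{3}(1)=g_{1}$ and the non-negativity of $\alpha$ and $\beta$) is a detail the paper leaves unstated, and it is handled correctly.
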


\begin{lemma}\label{l1}
Let $u_{1}$ and $u_{2}$ be two  solutions of the parabolic  variational inequality of second kind {\rm(\ref{eq1})}
with respectively  as second member $g_{1}$ and $g_{2}$, then we get
\begin{equation}\label{3.4}
\|u_{1} -u_{2}\|^{2}_{L^{\infty}(0 , T, H)} +
 \lambda\|u_{1} -u_{2}\|^{2}_{L^{2}(0 , T, V)}
\leq {1\over \lambda }\|g_{1}-g_{2}\|^{2}_{L^{2}(0 , T, V')},
\end{equation}
Where $\lambda$ is the coerciveness constant of the biliear form $a$.
 \end{lemma}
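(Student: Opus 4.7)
The plan is to test each variational inequality against the other solution and exploit the cancellation of $\Phi$ that is characteristic of VI of the second kind.

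First, I would write inequality \eqref{eq1} for $u = u_1$, $g = g_1$ and plug in the admissible test function $v = u_2(t) \in K$; then write \eqref{eq1} for $u = u_2$, $g = g_2$ and plug in $v = u_1(t) \in K$. Adding the two resulting inequalities, the functional terms telescope: $[\Phi(u_2) - \Phi(u_1)] + [\Phi(u_1) - \Phi(u_2)] = 0$. What remains, after reorganising the signs, is
\begin{equation*}
\langle \dot u_1(t) - \dot u_2(t),\, u_1(t) - u_2(t)\rangle \;+\; a\bigl(u_1(t) - u_2(t),\, u_1(t) - u_2(t)\bigr) \;\leq\; \langle g_1(t) - g_2(t),\, u_1(t) - u_2(t)\rangle.
\end{equation*}
This is the crucial place where the argument for second-kind VI goes through essentially as in the equality case, and it is where I expect the only real subtlety to sit: the cancellation of $\Phi$ depends on choosing the test functions to be exactly the two solutions of each other's problem, and on the shared initial datum $u_1(0) = u_2(0) = u_b$ so that no boundary-in-time term survives.

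Next I would invoke the standard chain rule $\tfrac12 \tfrac{d}{dt}\|u_1 - u_2\|_H^2 = \langle \dot u_1 - \dot u_2,\, u_1 - u_2\rangle$, which is valid thanks to the regularity $u_i \in L^2(0,T,V)$, $\dot u_i \in L^2(0,T,H)$ recalled in the introduction. Integrating from $0$ to $t$ and using $u_1(0) - u_2(0) = 0$ gives
\begin{equation*}
\tfrac12 \|u_1(t) - u_2(t)\|_H^2 + \int_0^t a(u_1 - u_2,\, u_1 - u_2)\,ds \;\leq\; \int_0^t \langle g_1 - g_2,\, u_1 - u_2\rangle\,ds.
\end{equation*}

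Then I would apply coercivity $a(v,v) \geq \lambda \|v\|_V^2$ on the left, and on the right bound the duality pairing by $\|g_1 - g_2\|_{V'} \|u_1 - u_2\|_V$ and apply Young's inequality with parameter $\lambda$ to split it as $\tfrac{\lambda}{2}\|u_1-u_2\|_V^2 + \tfrac{1}{2\lambda}\|g_1-g_2\|_{V'}^2$. The $\tfrac{\lambda}{2}\|u_1-u_2\|_V^2$ piece is absorbed into the coercive term on the left, leaving
\begin{equation*}
\|u_1(t) - u_2(t)\|_H^2 + \lambda \int_0^t \|u_1 - u_2\|_V^2 \, ds \;\leq\; \tfrac{1}{\lambda}\int_0^t \|g_1 - g_2\|_{V'}^2 \, ds,
\end{equation*}
valid for every $t \in [0,T]$. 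Finally I would take the supremum over $t$ on the left (the second term is monotone in $t$, so the $t = T$ value dominates the time integral) and extend the right-hand side to the whole interval, which yields the estimate \eqref{3.4}. No Grönwall step is needed, because the absorbed Young's-inequality parameter is tuned exactly to $\lambda$; the only care is the book-keeping of constants when combining the $L^\infty(H)$ and $L^2(V)$ pieces into a single inequality.
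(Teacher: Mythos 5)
Your proposal is correct and follows essentially the same route as the paper: test each inequality of (\ref{eq1}) with the other solution, add so that the $\Phi$-terms cancel, then use the chain rule, coercivity of $a$, and Young's inequality with parameter $\lambda$ to absorb the duality term (the paper's own proof is exactly this cross-testing and addition, stated in one line). The only point to watch, which the paper glosses over as well, is that the pointwise-in-$t$ bound controls the $L^{\infty}(0,T,H)$ term and the $L^{2}(0,T,V)$ term each by the right-hand side, so combining them into the single sum of (\ref{3.4}) with the constant $1/\lambda$ requires a slightly more careful bookkeeping (or a harmless factor $2$).
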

\begin{proof}
Taking $v=u_{2}$ in (\ref{eq1}) where $u=u_{1}$ and
$g= g_{1}$;  then   $v=u_{1}$ in (\ref{eq1}) where $u=u_{2}$
and $g= g_{2}$, so by addition 
(\ref{3.4}) holds.
\end{proof}

We generalize now  in our case the result on a monotony property, obtained by \cite{Mingot1}
for the elliptic variational inequality. This theorem is the  cornestone to prove the strict convexity
of the cost functional $J$ defined in Problem (\ref{P}) and  the cost functional $J_{h}$ defined in Problem (\ref{Ph}).
Remark first that with  the  duality bracks  $< \cdot , \cdot >$   defined by
$$ < g(t)  , \varphi >= (g(t)  , \varphi) + h\int_{\Gamma_{1}} b(t) \varphi ds$$
 (\ref{iv2})  leads to ({\ref{eq1}).
We prove the following theorem for  $\Phi$ such that $\Phi(v)=\int_{\Gamma_{2}} q|v| ds$.
\begin{theorem}\label{la}
For any two control $g_{1}$ and $g_{2}$ in $L^{2}(0, T, H)$,  it holds that
\begin{equation}\label{eq3.4}
 u_{4}(\mu) \leq u_{3}(\mu)\quad in \quad \Omega\times[0 , T],
    \quad \forall \mu\in [0 , 1].
\end{equation}
Here  $u_{4}(\mu)= u_{\mu g_{1}+(1-\mu) g_{2}}$,  $u_{3}(\mu)= \mu u_{g_{1}}+(1-\mu) u_{g_{2}}$,
$u_{1}=u_{g_{1}}$ and $u_{2}=u_{g_{2}}$ are the unique solutions of the variational  problem $P$,
with $g=g_{1}$ and $g=g_{2}$ respectively, and
  for the same $q$, and the same initial condition  {\rm(\ref{ic1})}.
Moreover,   it holds also that
\begin{equation}\label{eq3.4h}
 u_{h4}(\mu) \leq u_{h3}(\mu)\quad in \quad \Omega\times[0 , T],
    \quad \forall \mu\in [0 , 1].
\end{equation}
Here  $u_{4h}(\mu)= u_{\mu g_{1h}+(1-\mu) g_{2h}}$,  $u_{3h}(\mu)= \mu u_{g_{1h}}+(1-\mu) u_{g_{2h}}$,
$u_{1h}=u_{g_{1h}}$ and $u_{h2}=u_{g_{h2}}$ are the unique solutions of the variational  problem {\rm$P_{h}$},
 with $g=g_{1}$ and $g=g_{2}$ respectively, and
  for the same $q$, $h$, $b$ and the same initial condition  \rm{(\ref{ic1})}.
\end{theorem}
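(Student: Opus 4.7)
My plan is a regularization argument generalizing Mignot's technique for elliptic variational inequalities. Introduce a strictly convex $C^2$ approximation $\psi_\varepsilon$ of the absolute value, e.g.\ $\psi_\varepsilon(x) = \sqrt{x^2 + \varepsilon^2}$, and set $\Phi_\varepsilon(v) = \int_{\Gamma_2} q\,\psi_\varepsilon(v)\,ds$. The regularized problem then becomes a semilinear parabolic equation whose nonlinearity is a smooth Neumann-type term on $\Gamma_2$; standard theory yields a unique regular solution $u^\varepsilon_g$ for each datum $g$. Denote $u^\varepsilon_j = u^\varepsilon_{g_j}$ for $j=1,2$, set $u^\varepsilon_3(\mu) = \mu u^\varepsilon_1 + (1-\mu) u^\varepsilon_2$, and let $u^\varepsilon_4(\mu) = u^\varepsilon_{g_3(\mu)}$. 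The plan is to prove the pointwise comparison $u^\varepsilon_4(\mu) \leq u^\varepsilon_3(\mu)$ at the regularized level and then pass to the limit $\varepsilon \to 0^+$.

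By linearity of the time derivative and of $-\Delta$, the difference $w^\varepsilon := u^\varepsilon_4(\mu) - u^\varepsilon_3(\mu)$ satisfies a homogeneous parabolic equation in $\Omega$ with $w^\varepsilon(\cdot,0)=0$, a homogeneous boundary condition on $\Gamma_1$ (Dirichlet for $(P)$, and for $(P_h)$ a Robin contribution that produces a nonnegative term when tested against $(w^\varepsilon)^+$), and on $\Gamma_2$ a flux of the form $-q\bigl[\psi'_\varepsilon(u^\varepsilon_4) - \mu\psi'_\varepsilon(u^\varepsilon_1) - (1-\mu)\psi'_\varepsilon(u^\varepsilon_2)\bigr]$. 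Testing against $(w^\varepsilon)^+$ and using the coercivity of $a$ (resp.\ $a_h$) reduces everything to checking that, on $\{u^\varepsilon_4 > u^\varepsilon_3\}$, this boundary integrand is nonnegative; once that is in hand, integration in time together with $w^\varepsilon(0)=0$ forces $(w^\varepsilon)^+ \equiv 0$. This sign analysis is where I expect the main difficulty to lie: a naive Jensen inequality does not close the argument, because any $C^2$ approximation of $|\cdot|$ must have $\psi'_\varepsilon$ that is neither globally convex nor globally concave on $\mathbb{R}$. The correct sign has to be extracted from the monotonicity of $\psi'_\varepsilon$, the uniform bound $|\psi'_\varepsilon|\leq 1$, and a case analysis on the signs of $u^\varepsilon_1,u^\varepsilon_2$ on $\Gamma_2$, mirroring Mignot's use of the multipliers $\zeta_j\in\partial|u_j|$ in the elliptic setting.

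Finally, I would pass to the limit $\varepsilon \to 0^+$ by standard regularization estimates. The energy method of Lemma \ref{l1}, applied to the regularized equations, delivers uniform bounds on $u^\varepsilon_j$ and $u^\varepsilon_4$ in $L^\infty(0,T;H)\cap L^2(0,T;V)$ and on the time derivatives in $L^2(0,T;H)$. Up to a subsequence, $u^\varepsilon_j \to u_j$ strongly in $L^2(0,T;H)$ and weakly in $L^2(0,T;V)$; using the lower semicontinuity of $\Phi$ together with the pointwise convergence $\psi_\varepsilon(x)\to|x|$, one identifies the limits with the unique solutions of the original variational inequalities for both $(P)$ and $(P_h)$, and the pointwise inequality $u^\varepsilon_4 \leq u^\varepsilon_3$ carries over. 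For $(P_h)$ the argument is run in parallel after absorbing the Newton data into the duality bracket $\langle g(t),\varphi\rangle = (g(t),\varphi) + h\int_{\Gamma_1} b(t)\varphi\,ds$ as noted in the excerpt, so that both cases are treated uniformly with $a_h$ replacing $a$ and $V$ replacing $K_b$.
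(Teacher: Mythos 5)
Your overall strategy coincides with the paper's: the same regularization $\Phi_{\varepsilon}(v)=\int_{\Gamma_{2}}q\sqrt{\varepsilon^{2}+|v|^{2}}\,ds$, testing the difference of the regularized problems against the positive part $U^{+}_{\varepsilon}(\mu)=(u^{\varepsilon}_{4}(\mu)-u^{\varepsilon}_{3}(\mu))^{+}$, and a final passage $\varepsilon\to 0$ with identification of the limits and strong convergence, run in parallel for $(P)$ and $(P_{h})$ after absorbing the Newton term into the duality bracket. However, there is a genuine gap at exactly the point you flag as ``where I expect the main difficulty to lie'': you never actually prove that the boundary term $\langle \mu\Phi'_{\varepsilon}(u^{\varepsilon}_{1})+(1-\mu)\Phi'_{\varepsilon}(u^{\varepsilon}_{2})-\Phi'_{\varepsilon}(u^{\varepsilon}_{4}(\mu))\,,\,U^{+}_{\varepsilon}(\mu)\rangle$ is nonpositive; you only list ingredients (monotonicity of $\psi'_{\varepsilon}$, the bound $|\psi'_{\varepsilon}|\leq 1$, an unspecified case analysis on signs). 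Since this sign estimate is the entire content of the comparison $u^{\varepsilon}_{4}(\mu)\leq u^{\varepsilon}_{3}(\mu)$, and hence of the theorem, the proposal as written does not constitute a proof. The paper closes this step in two moves on the set $\Gamma_{2}'=\Gamma_{2}\cap\{u^{\varepsilon}_{4}(\mu)>u^{\varepsilon}_{3}(\mu)\}$: first, since $x\mapsto\psi(x)=x/\sqrt{\varepsilon^{2}+x^{2}}$ is increasing and $U^{+}_{\varepsilon}\geq 0$, one may replace $\psi(u^{\varepsilon}_{4}(\mu))$ by $\psi(u^{\varepsilon}_{3}(\mu))$ in the negative term; second, one applies Jensen's inequality for the concave restriction of $\psi$ to $\mathbb{R}^{+}$ with weights $\mu,1-\mu$ to get $\mu\psi(u^{\varepsilon}_{1})+(1-\mu)\psi(u^{\varepsilon}_{2})\leq\psi(u^{\varepsilon}_{3}(\mu))$ (the nonnegativity of the states on $\Gamma_{2}$ being available under the sign hypotheses of Proposition \ref{r2.3.2}, which are in force when the theorem is used). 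So your assertion that ``a naive Jensen inequality does not close the argument'' is half right: Jensen alone does not, but monotonicity followed by one-sided concavity does, and this is precisely the missing step you would need to write out.

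Two secondary remarks. Your claimed uniform bound on $\dot{u}^{\varepsilon}$ in $L^{2}(0,T;H)$ is neither established nor needed; the paper only derives a uniform $L^{2}(0,T;V')$-type control of the time derivatives, via a Hahn--Banach extension argument because $H^{1}_{0}(\Omega)\subset V$ is not dense, and this suffices for the weak compactness. Also, the identification of the weak limits with $u_{i}$ is not automatic from lower semicontinuity alone: one first replaces $\Phi'_{\varepsilon}$ by $\Phi_{\varepsilon}$ through the convexity inequality $\Phi_{\varepsilon}(v)-\Phi_{\varepsilon}(u^{\varepsilon}_{i})\geq\langle\Phi'_{\varepsilon}(u^{\varepsilon}_{i}),v-u^{\varepsilon}_{i}\rangle$, integrates in time, passes to the limit with $\liminf_{\varepsilon\to 0}\Phi_{\varepsilon}(u^{\varepsilon}_{i})\geq\Phi(l_{i})$, and then recovers the a.e.-in-time inequality by a Lebesgue-point argument before invoking uniqueness; the subsequent strong convergence uses the elementary bound $\Phi_{\varepsilon}(v)-\Phi(v)\leq\varepsilon\sqrt{|\Gamma_{2}|}\,\|q\|_{L^{2}(\Gamma_{2})}$. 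These can be repaired along the lines you sketch, but the sign analysis on $\Gamma_{2}$ must be supplied for the argument to stand.
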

\begin{proof}
The main difficulty, to prove this result comes from the fact that  the functional $\Phi$ is not differentiable.
To overcome this difficulty, we use the regularization  method and consider for $\varepsilon >0$ the following approach of $\Phi$
$$\Phi_{\varepsilon}(v)= \int_{\Gamma_{2}}q\sqrt{\varepsilon^{2} + |v|^{2}} ds, \qquad \forall v\in V,$$
which is Gateaux differentiable, with
$$\langle \Phi'_{\varepsilon}(w) \,, \, v\rangle =  \int_{\Gamma_{2}}
{qw v \over \sqrt{\varepsilon^{2} +|w|^{2}}} ds \qquad \forall (w , v)\in V^{2}.
$$
Let $u^{\varepsilon}$ be the unique solution of the variational inequality
\begin{eqnarray}\label{ive}
\langle \dot{u}^{\varepsilon}\, , \, v- u^{\varepsilon} \rangle + a(u^{\varepsilon}\, , \, v- u^{\varepsilon})
  + \langle \Phi'_{\varepsilon}(u^{\varepsilon}) \,, \, v-u^{\varepsilon} \rangle
 \geq  \langle g \,, \, v-u^{\varepsilon} \rangle  \quad a.e.  \, t\in [0 , T]
\nonumber\\
\forall v\in K, \mbox{ and } u^{\varepsilon}(0)= u_{b}.
\end{eqnarray}
Let us show first that for all  $\mu\in [0 , 1]$ $u^{\varepsilon}_{4}(\mu)\leq u^{\varepsilon}_{3}(\mu)$,
then that $u^{\varepsilon}_{3}(\mu) \to u_{3}(\mu)$ and   $u^{\varepsilon}_{4}(\mu) \to u_{4}(\mu)$ strongly in $L^{2}(0 , T; H)$ when $\varepsilon\to 0$.
Indeed for all $\mu\in [0 , 1]$, let consider $U_{\varepsilon}(\mu)=u^{\varepsilon}_{4}(\mu) - u^{\varepsilon}_{3}(\mu)$ thus  $u^{\varepsilon}_{4}(\mu)(t) -U_{\varepsilon}^{+}(\mu)(t)$ is in $K$. So we can take $v= u^{\varepsilon}_{4}(\mu)(t) -U_{\varepsilon}^{+}(\mu)(t)$ in (\ref{ive}) where $u^{\varepsilon}=u^{\varepsilon}_{4}(\mu)$ and $g=g_{3}(\mu)=\mu (g_{1}-g_{2})+g_{2}$.
We also can take  $v=u^{\varepsilon}_{1}(t) +U_{\varepsilon}^{+}(\mu)(t)$ in (\ref{ive}) where $u^{\varepsilon}=u^{\varepsilon}_{1}$ and $g=g_{1}$, and we multiply the two sides of the obtained inequality by $\mu$
then we take $v=u^{\varepsilon}_{2} +U_{\varepsilon}^{+}(\mu)$ in (\ref{ive}) where $u^{\varepsilon}=u^{\varepsilon}_{2}$ and $g=g_{2}$ and we multiply the two sides of the obtained inequality by $(1-\mu)$.
By adding the three  obtained inequalities  we get $a.e.  \, t\in ]0 , T[$,
\begin{eqnarray*}
{1\over 2}{\partial \over\partial t}(\|U_{\varepsilon}^{+}(\mu)\|_{H}^{2})
+ \lambda \|U_{\varepsilon}^{+}(\mu)\|_{V}^{2}\leq
\langle \mu \Phi'_{\varepsilon}(u^{\varepsilon}_{1})
         + (1-\mu) \Phi'_{\varepsilon}(u^{\varepsilon}_{2}) -\Phi'_{\varepsilon}(u^{\varepsilon}_{4}(\mu)) \,, \, U^{+}_{\varepsilon} (\mu)\rangle,
\end{eqnarray*}
hence as $U_{\varepsilon}^{+}(\mu)(0)=0$,   by integration from $t=0$ to $t=T$ we obtain  a.e.  $t\in ]0 , T[$
\begin{eqnarray*}
{1\over 2}\|U_{\varepsilon}^{+}(\mu)(T)\|^{2}_{H}+ \lambda\int_{0}^{T}
\| U_{\varepsilon}^{+}(\mu)(t)\|^{2}_{V} dt  \leq\qquad  \qquad
\nonumber\\
\leq
\int_{0}^{T}\langle \mu \Phi'_{\varepsilon}(u^{\varepsilon}_{1}(t))
         + (1-\mu) \Phi'_{\varepsilon}(u^{\varepsilon}_{2}(t)) -\Phi'_{\varepsilon}(u^{\varepsilon}_{4}(\mu)(t)) \,, \, U^{+}_{\varepsilon}(\mu)(t) \rangle dt.
\end{eqnarray*}
As
\begin{eqnarray*}
&&< \mu \Phi'_{\varepsilon}(u^{\varepsilon}_{1})
         + (1-\mu) \Phi'_{\varepsilon}(u^{\varepsilon}_{2}) -\Phi'_{\varepsilon}(u^{\varepsilon}_{4}(\mu)) \,, \, U^{+}_{\varepsilon}(\mu) > =
\nonumber\\\nonumber\\ &=&
 \int_{\Gamma_{2}'}{q\mu u^{\varepsilon}_{1}  U^{+}_{\varepsilon}(\mu) \over \sqrt{\varepsilon^{2} + |u^{\varepsilon}_{1}|^{2}}} ds
+  \int_{\Gamma_{2}'}{q(1-\mu) u^{\varepsilon}_{2}  U^{+}_{\varepsilon}(\mu) \over \sqrt{\varepsilon^{2} +  |u^{\varepsilon}_{2}|^{2}}} ds
-\int_{\Gamma_{2}'}{qu^{\varepsilon}_{4}(\mu)  U^{+}_{\varepsilon}(\mu)  \over \sqrt{\varepsilon^{2} +  |u^{\varepsilon}_{4}|^{2}}} ds
\end{eqnarray*}
where $\Gamma_{2}'= \Gamma_{2}\cap\{u^{\varepsilon}_{4}(\mu)> u^{\varepsilon}_{3}(\mu)\}$. The function $x\mapsto \psi(x)= \dfrac{x}{\sqrt{\varepsilon^{2} + x^{2}}}$ for $x\in \br$ is increasing ($\psi'(x) = \varepsilon^{2} (\varepsilon^{2} + x^{2})^{-3\over 2}  >0$) so
\begin{eqnarray*}
&& \int_{\Gamma_{2}'}{q\mu u^{\varepsilon}_{1}  U^{+}_{\varepsilon}(\mu) \over \sqrt{\varepsilon^{2} +\|u^{\varepsilon}_{1}\|_{\br^{N}}^{2}}}  ds
+  \int_{\Gamma_{2}'}{q(1-\mu) u^{\varepsilon}_{2}  U^{+}_{\varepsilon}(\mu) \over \sqrt{\varepsilon^{2} + |u^{\varepsilon}_{2}|^{2}}} ds
-\int_{\Gamma_{2}'}{qu^{\varepsilon}_{4}(\mu)  U^{+}_{\varepsilon}(\mu)  \over \sqrt{\varepsilon^{2} +
|u^{\varepsilon}_{4}|^{2}}}ds
\nonumber\\\nonumber\\
&\leq&
 \int_{\Gamma_{2}'}{q\mu u^{\varepsilon}_{1}  U^{+}_{\varepsilon}(\mu) \over \sqrt{\varepsilon^{2} +
|u^{\varepsilon}_{1}|^{2}}} ds
+  \int_{\Gamma_{2}'}{q(1-\mu) u^{\varepsilon}_{2}  U^{+}_{\varepsilon}(\mu) \over \sqrt{\varepsilon^{2} +
|u^{\varepsilon}_{2}|^{2}}}ds
-\int_{\Gamma_{2}'}{qu^{\varepsilon}_{3}(\mu)  U^{+}_{\varepsilon}(\mu)  \over \sqrt{\varepsilon^{2} +
|u^{\varepsilon}_{3}|^{2}}} ds.
\end{eqnarray*}
Moreover the function $\psi$ is concave on $\br^{+}\setminus\{0\}$
($\psi''(x)= -3\varepsilon^{2} x (\varepsilon^{2} + x^{2})^{-5\over 2}  <0$) thus
 \begin{eqnarray}\label{eq3.12}
{1\over 2}\|U^{+}(\mu)(T)\|^{2}_{H}+ \lambda\int_{0}^{T}
\| U^{+}(\mu)(t)\|^{2}_{V} dt \leq 0.
\end{eqnarray}
As $U^{+}_{\varepsilon}(\mu)=0$ on $\{\Gamma_{2}\times[0 , T]\}\cap\{u^{\varepsilon}_{4}(\mu)\leq u^{\varepsilon}_{3}(\mu)\}$ so
\begin{eqnarray}\label{eq2.10}
u^{\varepsilon}_{4}(\mu)\leq u^{\varepsilon}_{3}(\mu) \quad \forall  \mu\in [0 , 1].
\end{eqnarray}
Now we must prove  that $u^{\varepsilon}_{3}(\mu) \to u_{3}(\mu)$ and   $u^{\varepsilon}_{4}(\mu) \to u_{4}(\mu)$ strongly in $L^{2}(0 , T ; H)$ when $\varepsilon \to 0$.
Taking in (\ref{ive}) $v= u_{b}\in K$ with $u^{\varepsilon}= u^{\varepsilon}_{i}$ ($i=1 , 2$), we deduce that
   \begin{eqnarray*}
 \langle \dot{u}^{\varepsilon}_{i}  \, ,  \, u^{\varepsilon}_{i}-u_{b}\rangle+   a( u^{\varepsilon}_{i}-u_{b} \, ,  \, u^{\varepsilon}_{i}-u_{b}) +
\langle  \Phi'_{\varepsilon}(u^{\varepsilon}_{i}) \,, \, u^{\varepsilon}_{i}
\rangle
 \leq
 a( u_{b}  \, ,  \, u_{b}-u^{\varepsilon}_{i})
\nonumber\\ +
\langle \Phi'_{\varepsilon}(u^{\varepsilon}_{i}) \,, \, u_{b}\rangle  -  \langle g_{i} \,, \, u_{b}-u^{\varepsilon}_{i} \rangle.
   \end{eqnarray*}
 As
$$
 \langle  \Phi'_{\varepsilon}(u^{\varepsilon}_{i}) \,, \, u^{\varepsilon}_{i}  \rangle
\geq 0 \quad \mbox{ and }\quad
|\langle \Phi'_{\varepsilon}(u^{\varepsilon}_{i}) \,, \, u_{b}\rangle|
\leq \int_{\Gamma_{2}}q|u_{b}|ds $$
we deduce, using the Cauchy-Schwartz inequality, that
$\|u^{\varepsilon}_{i}\|_{L^{2}(0 , T; V)}$ so  also   $\|u^{\varepsilon}_{3}(\mu)\|_{L^{2}(0 , T; V)}$
are bounded independently from $\varepsilon$. By Theorem \ref{th1} we get
\begin{eqnarray*}
{1\over 2}\|u^{\varepsilon}_{3}(\mu)-u^{\varepsilon}_{4}(\mu)\|_{L^{\infty}(0 , T; H)} + \lambda\|u^{\varepsilon}_{3}(\mu)-u^{\varepsilon}_{4}(\mu)\|_{L^{2}(0 , T; V)}\leq \mu(1-\mu)({\cal A}^{\varepsilon} (T,g_{1}) + {\cal B}^{\varepsilon}(T,g_{2}))
\nonumber\\
\leq \mu(1-\mu){1\over 2}\left(\|g_{1}-g_{2}\|^{2}_{L^{2}(0 , T; H)} + \|u^{\varepsilon}_{1}-u^{\varepsilon}_{2}\|^{2}_{L^{2}(0 , T; H)}\right)
  \quad \forall \mu\in [0 , 1],
\end{eqnarray*}
thus $\|u^{\varepsilon}_{4}(\mu)\|_{L^{2}(0 , T; V)}$ is also bounded independently from $\varepsilon$.
So there exists   $l_{i}\in V$, for $i=1, \cdots, 4$, such that
\begin{eqnarray}\label{uib}
 u^{\varepsilon}_{i}\tow l_{i}  \mbox{ in } L^{2}(0 , T; V)\mbox{ weak},
  \mbox{ and in  } L^{\infty}(0 , T; H) \mbox{ weak star}.
\end{eqnarray}
 We  check now that $l_{i}= u_{i}$. Indeed for  $i=1, 2$ or  $4$ and as $\Phi$ is convex functional we have,
\begin{eqnarray*}\label{ive1}
  \langle \dot{u}^{\varepsilon}_{i}\, , \, v -u^{\varepsilon}_{i} \rangle +  a(u^{\varepsilon}_{i}\, , \, v -u^{\varepsilon}_{i} ) + \Phi_{\varepsilon}(v) -   \Phi_{\varepsilon}(u^{\varepsilon}_{i}) \geq \qquad \qquad \qquad
\nonumber\\
\langle \dot{u}^{\varepsilon}_{i}\, , \, v -u^{\varepsilon}_{i} \rangle +  a(u^{\varepsilon}_{i}\, , \, v -u^{\varepsilon}_{i} ) +
  \langle  \Phi'_{\varepsilon}(u^{\varepsilon}_{i}) \,, \, v -u^{\varepsilon}_{i}  \rangle
 \geq   \langle g_{i} \,, \, v -u^{\varepsilon}_{i}  \rangle, \quad a.e.\,  t\in ]0 , T[
\end{eqnarray*}
thus
\begin{eqnarray}\label{i1}
  \langle \dot{u}^{\varepsilon}_{i}\, , \, v -u^{\varepsilon}_{i} \rangle +  a(u^{\varepsilon}_{i}\, , \, v -u^{\varepsilon}_{i} ) + \Phi_{\varepsilon}(v) -   \Phi_{\varepsilon}(u^{\varepsilon}_{i})
 \geq   \langle g_{i} \,, \, v -u^{\varepsilon}_{i}  \rangle, \quad a.e.\,  t\in ]0 , T[.
\end{eqnarray}
  Taking $v=u^{\varepsilon}_{i}\pm \varphi$,  in (\ref{i1}) we have
\begin{eqnarray}\label{dib}
  \langle \dot{u}^{\varepsilon}_{i}\, , \, \varphi \rangle= -
  a(u^{\varepsilon}_{i} \, , \, \varphi)  +   \langle g_{i}  \, , \, \varphi\rangle
, \quad \forall \varphi\in L^{2}(0 , T, H^{1}_{0}(\Omega)).
\end{eqnarray}
As $H^{1}_{0}(\Omega)\subset V$ with  continuous inclusion but not dense, so $V'$ (the topological  dual of the space $V$)  is not identifiable  with a subset of $H^{-1}(\Omega)$.
However, following \cite{MT} we can use the Hahn-Banach Theorem in order to extend any element in
$H^{-1}(\Omega)$ to an element of $V'$ preserving its norm.
So from (\ref{uib}) and (\ref{dib}) we conclude that
 \begin{eqnarray}\label{convui}
\left.
\begin{array}{ll}
u^{\varepsilon}_{i}\tow l_{i}  \mbox{ in } L^{2}(0 , T, V) \mbox{ weak},
\mbox{ in   } L^{\infty}(0 , T, H)
\mbox{ weak star},  \\
\mbox{ and }
 \dot{u}^{\varepsilon}_{i}\tow \dot{l}_{i}  \mbox{ in } L^{2}(0 , T, V')
 \mbox{ weak}.
\end{array}
\right\}
\end{eqnarray}
Then from (\ref{i1}), and following (\cite{DL1972, Ta1982})  we can write
\begin{eqnarray*}\label{}
\int_{0}^{T} \left\{
\langle \dot{u}^{\varepsilon}_{i}\, , \, v \rangle +  a(u^{\varepsilon}_{i}\, , \, v )
 + \Phi_{\varepsilon}(v)  -\langle g_{i} \,, \, v -u^{\varepsilon}_{i}  \rangle\right\} dt
\geq
\int_{0}^{T} \left\{
\langle \dot{u}^{\varepsilon}_{i}\, , \, u^{\varepsilon}_{i} \rangle +  a(u^{\varepsilon}_{i}\, , \, u^{\varepsilon}_{i} )
 +    \Phi_{\varepsilon}(u^{\varepsilon}_{i}) \right\} dt
\nonumber\\
={1\over 2}\|u^{\varepsilon}_{i}(T)\|^{2}_{H}- {1\over 2}\|u_{b}(T)\|^{2}_{H} +
\int_{0}^{T} \left\{a(u^{\varepsilon}_{i}\, , \, u^{\varepsilon}_{i} ) +    \Phi_{\varepsilon}(u^{\varepsilon}_{i}) \right\} dt.
\qquad
\end{eqnarray*}
Using the property of $\Phi_{\varepsilon}$ we have $\liminf_{\varepsilon\to 0} \Phi_{\varepsilon}(u^{\varepsilon}_{i})\geq \Phi(l_{i})$, and (\ref{convui}) we obtain
\begin{eqnarray}\label{int2}
\int_{0}^{T} \left\{
\langle \dot{l}_{i}\, , \, v \rangle +  a(l_{i}\, , \, v ) + \Phi(v)  -\langle g_{i} \,, \, v -l_{i}  \rangle\right\} dt
\geq
\int_{0}^{T} \left\{
\langle \dot{l}_{i}\, , \, l_{i} \rangle +  a(l_{i}\, , \, l_{i} ) +    \Phi(l_{i}) \right\} dt. \quad
\end{eqnarray}
 Let $w\in K$ and any $t_{0}\in ]0 , T[$ then we consider the open interval
${\cal O}_{j}= ]t_{0}-{1\over j} , t_{0}+{1\over j}[\subset  ]0 , T[$
for $j\in \bn^{\star}$ sufficiently large we take in (\ref{int2})
$ v=
\left\{
\begin{array}{ll}
 w \mbox{ if } t\in {\cal O}_{j},\\
 l_{i}(t)  \mbox{ if }  t\in ]0 , T[\setminus{\cal O}_{j}
\end{array}
\right.
$
to get
\begin{eqnarray}\label{int1}
\int_{{\cal O}_{j}} \left\{
\langle \dot{l}_{i}\, , \, w-  l_{i}\rangle +  a(l_{i}\, , \,   w-  l_{i}) + \Phi(w)  - \Phi(l_{i})\right\} dt \geq  \int_{{\cal O}_{j}} \langle g_{i} \,, \,  w-  l_{i}  \rangle dt.
\end{eqnarray}
We use now the Lebesgues Theorem  to  obtain, when $j\to +\infty$
\begin{eqnarray}\label{e1}
 \langle \dot{l}_{i}\, , \, w-  l_{i}\rangle + a(l_{i}\, , w-  l_{i} )+ \Phi(w) -   \Phi(l_{i})
\geq  \langle g_{i} \,, \, w -l_{i}  \rangle, \quad a.e.  \, t\in ]0 , T[.
\end{eqnarray}
So by the  uniqueness of  the  solution of the parabolic  variational inequality of second kind (\ref{eq1}), we deduce that $l_{i}= u_{i}$.

To finish the proof we check the strong convergence of $u^{\varepsilon}_{i}$ to $u_{i}$.
 Indeed  for  $i=1,2$ or  $4$ taking $v=u_{i}(t)$ in (\ref{eq1}) where $u= u^{\varepsilon}_{i}$ then
       $v= u^{\varepsilon}_{i}(t)$ in (\ref{eq1}) where $u=u_{i}$, 
then by addition, and integration over the time interval $[0 , T]$ we obtain
\begin{eqnarray}\label{2.10}
{1\over 2} \|u_{i}(T) -u^{\varepsilon}_{i}(T)\|_{H}^{2}+  \int_{0}^{T} a(u_{i}(t)-u^{\varepsilon}_{i}(t)\, , u_{i}(t)- u^{\varepsilon}_{i}(t)) dt
\nonumber\\
\leq \int_{0}^{T}\Phi_{\varepsilon}(u_{i}(t))- \Phi(u_{i}(t))+
 \Phi(u^{\varepsilon}_{i}(t)) -\Phi_{\varepsilon}(u^{\varepsilon}_{i}(t)) dt
\end{eqnarray}
as
$$\Phi_{\varepsilon}(v) - \Phi(v) =  \int_{\Gamma_{2}} q(\sqrt{\varepsilon^{2} + |v|^{2}} - |v|) ds\leq \varepsilon \sqrt{|\Gamma_{2}|}\|q\|_{L^{2}(\Gamma_{2})},$$
so from (\ref{2.10})
\begin{eqnarray*}
 {1\over 2} \|u_{i} -u^{\varepsilon}_{i}\|_{L^{\infty}(0 , T , H)}^{2}+  \int_{0}^{T} a(u_{i}(t)-u^{\varepsilon}_{i}(t)\, , u_{i}(t)- u^{\varepsilon}_{i}(t)) dt
\leq 2T\varepsilon \sqrt{|\Gamma_{2}|}\|q\|_{L^{2}(\Gamma_{2})}
\end{eqnarray*}
thus
\begin{eqnarray}\label{2.11}
u^{\varepsilon}_{i}\to   u_{i}   \mbox{  strongly  in } L^{2}(0 , T; V)\cap L^{\infty}(0 , T; H) \mbox{ for } i= 1, 2, 4
\end{eqnarray}
then also
\begin{eqnarray}\label{2.12}
u^{\varepsilon}_{3}(\mu)= \mu  u^{\varepsilon}_{1} + (1-\mu)u^{\varepsilon}_{2} \to  u_{3}   \mbox{  strongly  in }  L^{2}(0 , T; V)
 \cap L^{\infty}(0 , T; H).
\end{eqnarray}
from (\ref{eq2.10}), (\ref{2.11}) and (\ref{2.12})  we get (\ref{eq3.4}).
As the proof is given for any two control $g=g_{1}$ and $g=g_{2}$ in $L^{2}(0 , T , H)$, but for
  the same $q$, $h$, $b$ and the same initial condition  \rm{(\ref{ic1})}, so we get also (\ref{eq3.4h}).
\end{proof}

\subsection{Dependency of the solutions  on the data}\label{sub-3.2}
Note that this Subsection is not needed in the last Section.
We just would like to establish  three propositions which allow us to deduce some additional and  interesting properties on the solutions of
the variational problems  $P$ and  $P_{h}$.

\begin{proposition}\label{l2.3} Let $u_{g_{n}}$, $u_{g}$ be two solutions of Problem {\rm $P$},  with
$g=g_{n}$ and $g=g$ respectively.
Assume that $g_{n}\tow g\quad in\quad L^{2}(0 , T , H)$ (weak), we get
\begin{equation}\label{eq3.5}
u_{g_{n}}\to u_{g}\quad in \quad L^{2}(0 , T , V)\cap L^{\infty}(0 , T , H)\quad (strong)
\end{equation}
\begin{equation}\label{eq3.5x}
\dot{u}_{g_{n}}\to \dot{u}_{g}\quad in \quad L^{2}(0 , T , V')\quad (strong).
\end{equation}
Moreover
\begin{equation}\label{eq3.6}
 g_{1}\geq g_{2} \quad in \quad \Omega\times[0 , T] \quad then \quad u_{g_{1}}\geq u_{g_{2}}\quad in \quad \Omega\times[0 , T].
\end{equation}
\begin{equation}\label{eq3.7}
 u_{min(g_{1} , g_{2})} \leq u_{4}(\mu) \leq u_{max(g_{1} , g_{2})}, \qquad \forall \mu\in [0 , 1].
\end{equation}
Let $u_{g_{1}h}$, $u_{g_{2}h}$ be two solutions of Problem {\rm$P_{h}$},  with
$g=g_{1}$ and $g=g_{2}$ respectively for all $h>0$,  we get
\begin{equation}\label{eq3.6new}
 g_{1}\geq g_{2} \quad in \quad \Omega\times[0 , T] \quad then \quad u_{g_{1}h}\geq u_{g_{2}h}\quad in \quad \Omega\times[0 , T].
\end{equation}
\begin{equation}\label{eq3.7h}
 u_{min(g_{1} , g_{2})h} \leq u_{h4}(\mu)
 \leq
 u_{max(g_{1} , g_{2})h} \qquad \forall \mu\in [0 , 1].
\end{equation}
\end{proposition}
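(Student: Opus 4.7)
The proposition combines two rather different kinds of statements, continuity under weak perturbations of the data (parts (\ref{eq3.5})--(\ref{eq3.5x})) and a maximum principle with its sandwich consequence (parts (\ref{eq3.6})--(\ref{eq3.7h})); both problems $P$ and $P_h$ are handled by the same scheme, replacing $a$ by $a_h$ and noting that the extra boundary term $h\int_{\Gamma_1}\cdot\,ds$ carries a cooperative sign in every energy estimate.

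For (\ref{eq3.5}) I would first produce uniform a priori bounds on $u_{g_n}$ in $L^{\infty}(0,T,H)\cap L^2(0,T,V)$ by testing the VI with the fixed admissible $v=u_b$ and exploiting the coercivity of $a$, Young's inequality, and the trace bound on $\Phi(u_b)$; the weakly convergent sequence $g_n$ is automatically bounded in $L^2(0,T,H)$. The Brezis regularity quoted after (\ref{ic1}) supplies a uniform $L^2(0,T,H)$ bound on $\dot u_{g_n}$, so Aubin--Lions (with $V\Subset H$) extracts a subsequence converging strongly in $L^2(0,T,H)$ and weakly in $L^2(0,T,V)$ to some $w$. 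Passing to the limit in the VI via the Lebesgue-point trick (\ref{int1})--(\ref{e1}) of Theorem \ref{la} and the lower semicontinuity of $\Phi$ identifies $w=u_g$, and uniqueness promotes convergence to the full sequence. To upgrade weak to strong $L^2(0,T,V)$ convergence I would subtract the VIs at $u_{g_n}$ (tested with $v=u_g$) and at $u_g$ (tested with $v=u_{g_n}$); the $\Phi$-terms cancel by symmetry and coercivity yields
$$\tfrac12\|u_{g_n}(T)-u_g(T)\|_H^2+\lambda\int_0^T\|u_{g_n}-u_g\|_V^2\,dt\le \int_0^T\bigl(g_n-g,\,u_{g_n}-u_g\bigr)_H\,dt,$$
whose right-hand side tends to zero because $u_{g_n}\to u_g$ strongly in $L^2(0,T,H)$ while $g_n-g\rightharpoonup 0$ weakly there. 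For (\ref{eq3.5x}) I would invoke the identity (\ref{dib}), extended from $H^{-1}(\Omega)$ to $V'$ by the Hahn--Banach device used in the proof of Theorem \ref{la}, to express $\dot u_{g_n}-\dot u_g$ through $g_n-g$, $A(u_{g_n}-u_g)$ and the difference of the subdifferential selections in $\partial\Phi(u_{g_n})$; the already-established strong $L^2(0,T,V)$ convergence of $u_{g_n}$ then produces the strong $L^2(0,T,V')$ convergence of $\dot u_{g_n}$.

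For (\ref{eq3.6}), assume $g_1\ge g_2$ with identical initial data, $q$ and $b$. The decisive algebraic fact is that the Tresca functional $\Phi(v)=\int_{\Gamma_2}q|v|\,ds$ satisfies the lattice identity $\Phi(\max(u_{g_1},u_{g_2}))+\Phi(\min(u_{g_1},u_{g_2}))=\Phi(u_{g_1})+\Phi(u_{g_2})$, checked pointwise by case analysis on $\{u_{g_1}\le u_{g_2}\}$ and $\{u_{g_1}>u_{g_2}\}$. I would test the VI for $u_{g_1}$ with $v=u_{g_1}+(u_{g_2}-u_{g_1})^+=\max(u_{g_1},u_{g_2})\in K_b$ (admissible because $u_{g_1}=u_{g_2}=b$ on $\Gamma_1$ forces $(u_{g_2}-u_{g_1})^+|_{\Gamma_1}=0$) and the VI for $u_{g_2}$ with $v=u_{g_2}-(u_{g_2}-u_{g_1})^+=\min(u_{g_1},u_{g_2})\in K_b$. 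Adding the two inequalities the $\Phi$-terms cancel exactly; the orthogonality $a(w^+,w^-)=0$ collapses the $a$-terms into $-a((u_{g_2}-u_{g_1})^+,(u_{g_2}-u_{g_1})^+)\le -\lambda\|(u_{g_2}-u_{g_1})^+\|_V^2$; and the time-derivative contribution reduces to $-\tfrac12\tfrac{d}{dt}\|(u_{g_2}-u_{g_1})^+\|_H^2$. Since $(g_2-g_1,(u_{g_2}-u_{g_1})^+)_H\le 0$ and $(u_{g_2}-u_{g_1})^+(0)=0$, Gronwall forces $(u_{g_2}-u_{g_1})^+\equiv 0$, which is (\ref{eq3.6}). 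The sandwich (\ref{eq3.7}) is immediate by applying (\ref{eq3.6}) twice to $\min(g_1,g_2)\le \mu g_1+(1-\mu)g_2\le \max(g_1,g_2)$. Statements (\ref{eq3.6new})--(\ref{eq3.7h}) for problem $P_h$ go through verbatim with $a_h$ replacing $a$; the extra term $-h\int_{\Gamma_1}((u_{g_2}-u_{g_1})^+)^2\,ds\le 0$ in the summed identity is cooperative, and no boundary admissibility is needed since $v\in V$ suffices.

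The main obstacle I expect is the strong $L^2(0,T,V')$ convergence of the time derivatives in (\ref{eq3.5x}): weak $L^2(0,T,H)$ convergence of $g_n$ does not in general imply any strong convergence of $g_n$ even in the weaker space $L^2(0,T,V')$ (oscillatory sequences $g_n(t)=\sin(nt)\phi$ are a standard obstruction), so the argument must genuinely use the VI identity (\ref{dib}) together with the previously established strong $L^2(0,T,V)$ convergence of $u_{g_n}$ rather than a direct compactness invocation on $\dot u_{g_n}$. A secondary subtlety, easy to overlook, is certifying that the test perturbations $u_{g_1}+(u_{g_2}-u_{g_1})^+$ and $u_{g_2}-(u_{g_2}-u_{g_1})^+$ actually lie in $K_b$ for problem $P$; this rests squarely on the common boundary trace $u_{g_1}=u_{g_2}=b$ on $\Gamma_1$.
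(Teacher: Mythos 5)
Your handling of (\ref{eq3.5}) and of (\ref{eq3.6})--(\ref{eq3.7h}) is essentially the paper's own proof: the same a priori bounds and passage to the limit via the arguments (\ref{i1})--(\ref{e1}) plus uniqueness, the same subtracted-inequality giving $\tfrac12\|u_{g_n}(T)-u_g(T)\|_H^2+\lambda\|u_{g_n}-u_g\|^2_{L^2(0,T,V)}\le\int_0^T(g_n-g,\,u_{g_n}-u_g)\,dt$, and for the comparison principle exactly the paper's test functions $u_{g_1}+(u_{g_2}-u_{g_1})^{+}$ and $u_{g_2}-(u_{g_2}-u_{g_1})^{+}$ (the paper writes them with $(u_{g_1}-u_{g_2})^{-}$), with the same cancellation of the Tresca terms, the Poincar\'e step for problem $P$, and the cooperative $h$-term for $P_h$; you merely make explicit two points the paper leaves tacit, namely the Aubin--Lions argument that upgrades the weak convergences to strong convergence in $L^2(0,T,H)$ (needed to kill the weak--weak pairing on the right-hand side) and the admissibility of the truncated test functions in $K_b$.

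The genuine gap is your argument for (\ref{eq3.5x}). The identity (\ref{dib}) (or its analogue on $V_2$ used in the paper) gives, for admissible $\varphi$, $\langle \dot u_{g_n}-\dot u_g,\varphi\rangle=-a(u_{g_n}-u_g,\varphi)+(g_n-g,\varphi)$: the strong $L^2(0,T,V)$ convergence of the states annihilates the first term, but the second term is precisely the restriction of $g_n-g$, and by your own oscillation example ($g_n-g=\sin(nt)\phi$ with $\phi$ smooth and compactly supported) its norm in $L^2(0,T,H^{-1}(\Omega))$, hence in $L^2(0,T,V')$ after any norm-preserving extension, does not tend to zero. So the identity cannot convert strong convergence of $u_{g_n}$ into strong convergence of $\dot u_{g_n}$; the obstruction you correctly flag in your last paragraph defeats exactly the device you propose to circumvent it. Already in the unconstrained linear case $\dot u_n+Au_n=g_n$, $u_n(0)=u_b$, one has $\dot u_n-\dot u=-A(u_n-u)+(g_n-g)$ with $A(u_n-u)\to0$ strongly in $L^2(0,T,V')$ while $g_n-g\not\to0$ strongly there, so $\dot u_n\not\to\dot u$ strongly in $L^2(0,T,V')$; nothing in the constrained Tresca setting improves this. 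Be aware that the paper's own proof establishes only the weak convergence $\dot u_{g_n}\rightharpoonup\dot u_g$ in $L^2(0,T,V')$ (display (\ref{eqW})) and never returns to (\ref{eq3.5x}); under the sole hypothesis $g_n\rightharpoonup g$ in $L^2(0,T,H)$ the strong statement (\ref{eq3.5x}) is not provable, and only the weak version should be claimed. A minor secondary point: the uniform $L^2(0,T,H)$ bound on $\dot u_{g_n}$ you invoke for Aubin--Lions needs uniformity in $n$, which requires an argument; the $L^2(0,T,V_2')$ bound obtained from the identity plus the Hahn--Banach extension, as in the paper, is enough for the compactness step and is the cleaner route.
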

\begin{proof}
Let $g_{n}\tow g$ in $L^{2}(0 , T , H)$,  $u_{g_{n}}$ and $u_{g}$  be in $L^{2}(0 , T , K)$ such that
\begin{eqnarray}\label{gh}
 \langle \dot{u}_{g_{n}} , v-u_{g_{n}}\rangle + a(u_{g_{n}} , v-u_{g_{n}})
+\Phi(v)- \Phi(u_{g_{n}})
\geq ( g_{n} , v-u_{g_{n}})   \nonumber\\ \qquad \forall v\in K , \quad a.e.  \, t\in ]0 , T[.
\end{eqnarray}
Remark also that  $V_{2}= \{ v \in V : \quad v_{|_{\Gamma_{2}}}= 0\}\subset V$ with  continuous inclusion
 but not dense, so $V'$ is not identifiable
 with a subset of $V'_{2}$. However, following again \cite{MT} we can use the Hahn-Banach Theorem in
order to extend any element in $V'_{2}$ to an element of $V'$ preserving its norm.
So with the same arguments as in (\ref{i1})- (\ref{e1}),  we conclude that there exists $\eta$
 such that (eventually for a subsequence)
 \begin{eqnarray}\label{eqW}
\left.\begin{array}{ll}
u_{g_{n}}  \tow \eta  \mbox{ in } L^{2}(0 , T, V) \mbox{ weak},
 \mbox{ in   } L^{\infty}(0 , T, H)
\mbox{ weak star, } \\
\mbox{ and }
 \dot{u}_{g_{n}}\tow \dot{\eta} \mbox{ in } L^{2}(0 , T, V')\mbox{ weak}
 \end{array}\right\}
\end{eqnarray}
Using  (\ref{eqW})  and  taking $n\to +\infty$ in (\ref{gh}), we get
\begin{equation}\label{qh}
\langle \dot{\eta} , v-\eta\rangle + a(\eta , v-\eta ) +\Phi(v)- \Phi(u_{\eta})\geq ( g , v-\eta), \qquad \forall v\in K, \quad a.e. \, t\in ]0 , T[,
\end{equation}
by the uniqueness of the solution of (\ref{eq1}) we obtain that $\eta= u_{g}$.
Taking now $v= u_{g}(t)$ in (\ref{gh}) and $v= u_{g_{n}}(t)$ in (\ref{qh}), we get by addition
and integration over $[0 , T]$ we obtain
\begin{equation*}
{1\over 2}\|u_{g_{n}}(T) - u_{g}(T)\|^{2}_{H}  +\lambda\|u_{g_{n}} - u_{g}\|_{L^{2}(0, T , V)}^{2}\leq \int_{0}^{T}(g_{n}(t) -g(t)\, , \, u_{g_{n}}(t)- u_{g}(t))dt,
\end{equation*}
so from the above inequality and (\ref{eqW}) we deduce (\ref{eq3.5}).
To prove (\ref{eq3.6}) we take first  $v=u_{1}(t)+(u_{1}(t)-u_{2}(t))^{-}$ (which is in $K$)  in (\ref{eq1}) where $u=u_{1}$  and
$g= g_{1}$,  then taking  $v=u_{2}(t) - (u_{1}(t)-u_{2}(t))^{-}$ (which also is in $K$) in (\ref{eq1})  where $u=u_{2}$
and $g= g_{2}$,  we get
$$
{1\over 2}\|(u_{1}(T) - u_{2}(T))^{-}\|^{2}_{H}  + \lambda\|(u_{1} - u_{2})^{-}\|_{L^{2}(0, T , V)}^{2}
 \leq \int_{0}^{T}(g_{2}(t)-g_{1}(t)\, ,\,  (u_{1}(t)-u_{2}(t))^{-} )dt
$$
as
$$\Phi(u_{1}) -  \Phi(u_{1} +(u_{1}-u_{2})^{-})+ \Phi(u_{2})-  \Phi(u_{2} -(u_{1}-u_{2})^{-})=0.$$
So if $g_{2}-g_{1}\leq 0$ in $\Omega\times[0 , T]$ then
$\|(u_{1}-u_{2})^{-}\|_{L^{2}(0 , T, V)}=0$, and as
$(u_{1}-u_{2})^{-}=0$ on $\Gamma_{1}\times]0 , T[$ we have by the Poincar\'e inequality that $u_{1}-u_{2}\geq 0$ in $\Omega\times[0 , T]$. Then  (\ref{eq3.7})  follows from (\ref{eq3.6}) because
$$ min\{g_{1} , g_{2}\} \leq \mu g_{1} +(1-\mu) g_{2} \leq max\{g_{1} , g_{2}\} \quad \forall \mu \in [0 , T].
$$
Similarly taking  $v=u_{g_{1}h}(t)+(u_{g_{1}h}(t)-u_{g_{2}h}(t))^{-}$ (which is in $V$)  in (\ref{iv2}) where $u=u_{g_{1}h}$  and
$g= g_{1}h$,  then taking  $v=u_{g_{2}h}(t) - (u_{g_{1}h}(t)-u_{g_{2}h}(t))^{-}$ (which also is in $V$) in (\ref{iv2})  where $u=u_{g_{2}h}$
and $g= g_{2}h$,  we get
\begin{eqnarray*}
{1\over 2}\|(u_{g_{1}h}(T) - u_{g_{2}h}(T))^{-}\|^{2}_{H}  + \lambda\|(u_{g_{1}h} - u_{g_{2}h})^{-}\|_{L^{2}(0, T , V)}^{2}
+ h\|(u_{g_{1}h} - u_{g_{2}h})^{-}\|_{L^{2}(0, T , L^{2}(\Gamma_{1}))}^{2}\nonumber\\
 \leq \int_{0}^{T}(g_{2}(t)-g_{1}(t)\, ,\,  (u_{1}(t)-u_{2}(t))^{-} )dt
\end{eqnarray*}
 so we get  also (\ref{eq3.6new}), then  (\ref{eq3.7h}) follows.
\end{proof}
The following propositions  \rm{\ref{l2.3h}} and \rm{\ref{l2.3h2}} are to give, with   some assumptions, a first information that
 the sequence $(u_{g_{h}})_{ h> 0}$ is increasing and
  bounded, therefore it is convergent in some sense. Remark from  (\ref{pmx})  that $u_{g_{h}} \geq 0$  although $g <0$,
 provided to take the parameter $h$ sufficiently large.
 \begin{proposition}\label{l2.3h} Assume that  $h>0$ and is sufficiently large,
$b$ is a positive constant, $q\geq 0$ on $\Gamma_{2}\times[0 , T]$, then we have
\begin{equation}\label{eq3.71}
g\leq 0 \mbox{ in } \Omega\times[0 , T]  \Longrightarrow 0\leq u_{g_{h}}\leq b
 \mbox{ in  } \Omega\cup\Gamma_{1}\times[0 , T],
\end{equation}
 \end{proposition}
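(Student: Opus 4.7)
The plan is to treat the two bounds separately by choosing admissible test functions in (\ref{iv2}) tailored to $b$ being a positive constant; the upper bound works uniformly in $h$, whereas the lower bound is where the hypothesis ``$h$ sufficiently large'' enters.

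\textbf{Upper bound} $u_{g_h} \le b$. I test (\ref{iv2}) with $v := u_{g_h} - w \in V$, where $w := (u_{g_h} - b)^+$. The constancy of $b$ produces all the useful cancellations: $\dot b = 0$ turns $\langle \dot u_{g_h}, w\rangle$ into $\tfrac{1}{2}\tfrac{d}{dt}\|w\|_H^2$; the boundary contribution $h\!\int_{\Gamma_1}\! b w\,ds$ arising from $a_h(u_{g_h}, w) = a(w,w) + h\!\int_{\Gamma_1}\!(w+b)w\,ds$ cancels exactly the identical term on the right-hand side of (\ref{iv2}); and $\Phi(u_{g_h}-w) - \Phi(u_{g_h}) = -\!\int_{\Gamma_2}\! qw\,ds \le 0$ because $q\ge 0$ and $u_{g_h} > b > 0$ on $\{w > 0\}$. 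After rearrangement one obtains
\[
\tfrac{1}{2}\tfrac{d}{dt}\|w\|_H^2 + \lambda\|w\|_V^2 + h\!\int_{\Gamma_1}\!w^2\,ds + \!\int_{\Gamma_2}\!q w\,ds \le (g,w) \le 0,
\]
since $g\le 0$ and $w \ge 0$. Integrating on $[0,T]$ and using the compatibility $u_b \le b$ (so $w(0)=0$) yields $w \equiv 0$.

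\textbf{Lower bound} $u_{g_h} \ge 0$. I reuse the inequality (\ref{pmx}) from the proof of Proposition \ref{r2.3.2}, obtained by testing (\ref{iv2}) with $v = u^+$; using $|u|-u^+ = u^-$ on $\Gamma_2$ and the compatibility $u_b \ge 0$ (so $u^-(0)=0$), it reads
\[
\|u^-(T)\|_H^2 + \lambda\!\int_0^T\!\|u^-\|_V^2\,dt + h\!\int_0^T\!\!\!\int_{\Gamma_1}\!(u^-)^2 + h b\!\int_0^T\!\!\!\int_{\Gamma_1}\!u^- + \!\int_0^T\!\!\!\int_{\Gamma_2}\!q u^- \le \int_0^T\!(-g,u^-)\,dt.
\]
Unlike in the upper bound, the right-hand side is non-negative (since $g \le 0$, $u^- \ge 0$), so the inequality is not automatic; this is the main obstacle. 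The plan is to bound the right-hand side via Cauchy--Schwarz, $\int(-g,u^-) \le \|g\|_{L^2(0,T;H)}\|u^-\|_{L^2(0,T;H)}$, then use a trace-Poincar\'e inequality $\|u^-\|_H \le C_1\|\nabla u^-\|_H + C_2\|u^-\|_{L^2(\Gamma_1)}$ together with Young's inequality to distribute the two resulting contributions against $\lambda\|u^-\|_V^2$ and $h\|u^-\|_{L^2(\Gamma_1)}^2$ on the left. For $h$ large enough, the Young-expansion terms are absorbed into the left-hand side, leaving a residual of the form $\|u^-\|_{L^2(0,T;V)}^2 \le C\|g\|_{L^2(0,T;H)}^2/h$; combined with the monotonicity of $h \mapsto u_{g_h}$ alluded to in the remark preceding the proposition, this forces $u^- \equiv 0$ once $h$ exceeds a threshold depending on $\|g\|$, $b$, and the trace constants. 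In other words, the Newton-type boundary terms in (\ref{pmx}), whose coefficients grow linearly in $h$, are what ultimately absorb the unfavorable sign contribution $\int(-g,u^-)\ge 0$ and rigorously realize the intuitive physical picture that a very large Newton coefficient drags $u_{g_h}$ close to the positive datum $b$ on $\Gamma_1$.
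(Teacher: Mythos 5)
Your upper-bound argument coincides with the paper's own proof: you test (\ref{iv2}) with $v=u_{g_{h}}-(u_{g_{h}}-b)^{+}$, use that $b$ is a positive constant (so $a(b,\cdot)=0$ and the two terms $h\int_{\Gamma_{1}}b\,(u_{g_{h}}-b)^{+}ds$ cancel), the sign $q\geq 0$ to discard the $\Phi$-difference, and $g\leq 0$ together with $(u_{g_{h}}-b)^{+}\geq 0$ to conclude, after integration in time and the initial compatibility (the paper takes $u_{g_{h}}(0)=b$, you assume $u_{b}\leq b$; immaterial), that $(u_{g_{h}}-b)^{+}\equiv 0$. That half is fine and is essentially identical to the paper.

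The lower bound is where your proposal has a genuine gap. The absorption you sketch cannot produce $\|u^{-}\|_{L^{2}(0,T;V)}^{2}\leq C\|g\|_{L^{2}(0,T;H)}^{2}/h$: after Cauchy--Schwarz and the splitting $\|u^{-}\|_{H}\leq C_{1}\|\nabla u^{-}\|_{H}+C_{2}\|u^{-}\|_{L^{2}(\Gamma_{1})}$, the term $C_{1}\|g\|\,\|\nabla u^{-}\|$ must be Young-absorbed into $\lambda\|u^{-}\|_{V}^{2}$, which leaves the $h$-independent residual $C_{1}^{2}\|g\|^{2}/(2\lambda)$; only the $\Gamma_{1}$ contribution decays like $1/h$. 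The $h$-terms in (\ref{pmx}) live only on $\Gamma_{1}$, so a large Newton coefficient forces $u^{-}$ to be small on $\Gamma_{1}$ but gives no control in the interior, and it cannot: by Lemma \ref{l6.1}, $u_{g_{h}}\to u_{g}$ strongly as $h\to+\infty$, and for $g\leq 0$ of large magnitude relative to $b$ the Dirichlet solution $u_{g}$ can itself be negative inside $\Omega$, so if $u_{g_{h}}\geq 0$ held for all large $h$ the limit would be nonnegative as well; hence no threshold in $h$ (even depending on $\|g\|$, $b$ and trace constants) can close the argument for arbitrary $g\leq 0$. Your final step is also circular: the monotonicity of $h\mapsto u_{g_{h}}$ you invoke is (\ref{eq3.72})--(\ref{eq3.73}) of Proposition \ref{l2.3h2}, whose proof in the paper uses (\ref{eq3.71}), and in any case smallness plus monotonicity would not give $u^{-}\equiv 0$ at a finite $h$. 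For comparison, the paper's proof of this proposition establishes only the upper bound (by exactly your computation) and disposes of the nonnegativity by the informal remark preceding the statement, based on (\ref{pmx}) and ``$h$ sufficiently large''; it contains no quantitative argument of the kind you attempt, and your attempt makes visible why such an argument does not close as described.
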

\begin{proof}
Taking in (\ref{iv2}) $u= u_{g_{h}}(t)$ and $v= u_{g_{h}}(t)- (u_{g_{h}}(t)-b)^{+}$,  we get
 \begin{eqnarray*}\label{}
\langle \dot{u}_{g_{h}}\, ,\,  (u_{g_{h}}-b)^{+}\rangle
   + a_{h}(u_{g_{h}} \, ,\,  (u_{g_{h}}-b)^{+})
- \Phi(u_{g_{h}}- (u_{g_{h}}-b)^{+})+ \Phi(u_{g_{h}})
\nonumber\\ \leq
(g \, ,\, (u_{g_{h}}-b)^{+})   + h\int_{\Gamma_{1}}b(u_{g_{h}}-b)^{+} ds,\quad a.e.\, t\in
]0 , T[
\end{eqnarray*}
as $b$ is constant we have
$a(b  \, ,\,  (u_{g_{h}}(t)-b)^{+}) =0$ so a.e. $t\in ]0 , T[$
\begin{eqnarray*}
{1\over 2}{\partial \over\partial t}\left(\|(u_{g_{h}}(t)-b)^{+}\|^{2}_{H}\right)
+ a((u_{g_{h}}-b)^{+} \, ,\,  (u_{g_{h}}-b)^{+})
+ h\int_{\Gamma_{1}}u_{g_{h}}(u_{g_{h}}-b)^{+} ds\nonumber\\
 \leq (g \, ,\, (u_{g_{h}}-b)^{+})
 +h\int_{\Gamma_{1}}b(u_{g_{h}}-b)^{+} ds
 +\Phi(u_{g_{h}}- (u_{g_{h}}-b)^{+})- \Phi(u_{g_{h}}),
\end{eqnarray*}
as $u_{g_{h}}(0)=b$  and
\begin{eqnarray*}
\Phi(u_{g_{h}}- (u_{g_{h}}-b)^{+})- \Phi(u_{g_{h}})
&=& \int_{\Gamma_{2}}q ( |u_{g_{h}}- (u_{g_{h}}-b)^{+}|- |u_{g_{h}}|) ds\leq 0,
\end{eqnarray*}
so
\begin{eqnarray*}
{1\over 2}\|(u_{g_{h}}(T)-b)^{+}\|^{2}_{H}
+ \int_{0}^{T}a_{h}((u_{g_{h}}(t)-b)^{+} \, ,\,  (u_{g_{h}}(t)-b)^{+}) dt
\leq  \nonumber\\
 \leq\int_{0}^{T}(g(t) \, ,\, (u_{g_{h}}(t)-b)^{+})dt \leq 0,
\end{eqnarray*}
thus (\ref{eq3.71}) holds.
\end{proof}

 \begin{proposition}\label{l2.3h2}
 Assume that  $h>0$ and is sufficiently large.
Let $g$, $g_{1}$, $g_{2}$ in $L^{2}(0 , T , H)$, $q\in L^{2}(0 , T ,  L^{2}(\Gamma_{2}))$ and $b$  is a positive constant,
we have
\begin{equation}\label{eq3.72}
 g_{2}\leq g_{1}\leq 0  \mbox{ in } \Omega\times[0 , T] \quad and
\quad h_{2}\leq h_{1}
  \Longrightarrow  0\leq u_{g_{2}h_{2}}\leq u_{g_{1} h_{1}}   \mbox{ in  } \Omega\times[0 , T],
\end{equation}
\begin{equation}\label{eq3.73}
  g \leq 0 \mbox{ in } \Omega\times[0 , T]
  \Longrightarrow 0\leq u_{g_{h}}\leq  u_{g}   \mbox{ in  } \Omega\times[0 , T], \quad \forall h>0.
\end{equation}
\begin{equation}\label{eq3.74}
h_{2}\leq h_{1}
\Longrightarrow \|u_{g_{h_{2}}}-  u_{g_{h_{1}}}\|_{L^{2}(0 , T , V)}\leq {\|\gamma_{0}||\over \lambda_{1}\min(1 , h_{2})}\|b-u_{g_{h_{1}}}\|_{L^{2}(0 , T ,  {\bf L}^{2}(\Gamma_{1}))} (h_{1}-h_{2})
\end{equation}
 \end{proposition}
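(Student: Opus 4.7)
} The plan is to handle the three inequalities separately, all by testing the defining variational inequality \eqref{iv2} (for suitable pairs of solutions) with carefully chosen admissible elements, and then adding the resulting inequalities so that the nonlinear functional $\Phi$ and the source $g$ either cancel or acquire a favourable sign. The common denominator of the argument is the identity $a_{h}(\cdot,\cdot)=a(\cdot,\cdot)+h\int_{\Gamma_{1}}(\cdot)(\cdot)\,ds$, which lets us pass between $a_{h_{1}}$ and $a_{h_{2}}$ and control the boundary discrepancies. Proposition \ref{l2.3h} will be used throughout to enforce $0\leq u_{g_{h}}\leq b$ when $g\leq 0$ and $h$ is large.

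For \eqref{eq3.72}, I would set $W=u_{g_{2}h_{2}}-u_{g_{1}h_{1}}$ and, in the inequalities satisfied by $u_{g_{1}h_{1}}$ and $u_{g_{2}h_{2}}$, test respectively with $v=u_{g_{1}h_{1}}+W^{+}$ and $v=u_{g_{2}h_{2}}-W^{+}$ (both lie in $V$). On the set $\{W>0\}$ the $\Phi$ terms cancel pairwise, so after adding the two inequalities and using $a(W,W^{+})=a(W^{+},W^{+})\geq \lambda\|W^{+}\|_{V}^{2}$, I obtain
\begin{equation*}
\tfrac12\tfrac{d}{dt}\|W^{+}\|_{H}^{2}+\lambda\|W^{+}\|_{V}^{2}+h_{2}\int_{\Gamma_{1}}(W^{+})^{2}ds\leq -(g_{1}-g_{2},W^{+})+(h_{1}-h_{2})\int_{\Gamma_{1}}(u_{g_{1}h_{1}}-b)W^{+}ds.
\end{equation*}
Since $g_{1}\geq g_{2}$, $h_{1}\geq h_{2}$, and $u_{g_{1}h_{1}}\leq b$ by Proposition \ref{l2.3h}, the right-hand side is non-positive; as $W^{+}(0)=0$ this forces $W^{+}\equiv 0$, i.e.\ $u_{g_{2}h_{2}}\leq u_{g_{1}h_{1}}$. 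The lower bound $u_{g_{2}h_{2}}\geq 0$ is just Proposition \ref{l2.3h}. The main delicate point is the sign analysis of $(h_{1}-h_{2})\int_{\Gamma_{1}}(u_{g_{1}h_{1}}-b)W^{+}ds$, which is the reason we need $h$ large in the hypothesis of Proposition \ref{l2.3h}.

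For \eqref{eq3.73} the same trick is used with $W=u_{g_{h}}-u_{g}$, but now $u_{g}$ lives in $K_{b}$ (Dirichlet) while $u_{g_{h}}\in V$ (Robin). I would test the $u_{g_{h}}$ inequality with $v=u_{g_{h}}-W^{+}$ and the $u_{g}$ inequality with $v=u_{g}+W^{+}$; admissibility of the latter requires $W^{+}=0$ on $\Gamma_{1}$, which holds because $u_{g_{h}}\leq b=u_{g}$ on $\Gamma_{1}$ by Proposition \ref{l2.3h}. The $\Phi$ terms again cancel on $\{W>0\}$, the boundary integral arising from $h\int_{\Gamma_{1}}u_{g_{h}}W^{+}ds$ vanishes (same reason), and one obtains $\frac12 \frac{d}{dt}\|W^{+}\|_{H}^{2}+\lambda\|W^{+}\|_{V}^{2}\leq 0$, hence $u_{g_{h}}\leq u_{g}$; combined with Proposition \ref{l2.3h}, $0\leq u_{g_{h}}\leq u_{g}$.

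For \eqref{eq3.74} I would take $W=u_{g_{h_{2}}}-u_{g_{h_{1}}}$, test the $u_{g_{h_{1}}}$ inequality with $v=u_{g_{h_{2}}}$, the $u_{g_{h_{2}}}$ inequality with $v=u_{g_{h_{1}}}$, and add. The $\Phi$ and $g$ terms cancel exactly, and the computation
\begin{equation*}
-a_{h_{2}}(u_{g_{h_{2}}},W)+a_{h_{1}}(u_{g_{h_{1}}},W)=-a(W,W)+(h_{1}-h_{2})\int_{\Gamma_{1}}u_{g_{h_{1}}}W\,ds-h_{2}\int_{\Gamma_{1}}W^{2}ds
\end{equation*}
produces, after combining with the boundary $b$-contribution $(h_{1}-h_{2})\int_{\Gamma_{1}}bW\,ds$,
\begin{equation*}
\tfrac12\tfrac{d}{dt}\|W\|_{H}^{2}+a_{h_{2}}(W,W)\leq (h_{1}-h_{2})\int_{\Gamma_{1}}(u_{g_{h_{1}}}-b)W\,ds.
\end{equation*}
Coercivity $a_{h_{2}}(W,W)\geq \lambda_{1}\min(1,h_{2})\|W\|_{V}^{2}$, the trace inequality $\|W\|_{L^{2}(\Gamma_{1})}\leq\|\gamma_{0}\|\,\|W\|_{V}$, Cauchy--Schwarz and integration in time over $[0,T]$ (with $W(0)=0$) then give
\begin{equation*}
\lambda_{1}\min(1,h_{2})\|W\|_{L^{2}(0,T;V)}\leq \|\gamma_{0}\|\,\|b-u_{g_{h_{1}}}\|_{L^{2}(0,T;L^{2}(\Gamma_{1}))}(h_{1}-h_{2}),
\end{equation*}
which is exactly \eqref{eq3.74}. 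The only place where one must be attentive is the bookkeeping of the $(h_{1}-h_{2})$-terms on $\Gamma_{1}$: in parts (\ref{eq3.72}) and (\ref{eq3.74}) they do not vanish but must be absorbed or estimated, and their presence is the main obstacle to a purely algebraic cancellation.
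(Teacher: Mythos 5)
Your proposal is correct and follows essentially the same route as the paper: the same comparison test functions $u_{g_{1}h_{1}}+W^{+}$, $u_{g_{2}h_{2}}-W^{+}$ for \eqref{eq3.72}, the same use of Proposition \ref{l2.3h} to ensure $u_{g_{1}h_{1}}\leq b$ (and $W^{+}=0$ on $\Gamma_{1}$ so that $u_{g}+W^{+}\in K$) for \eqref{eq3.73}, and the same cross-testing with $v=u_{g_{h_{1}}}$, $v=u_{g_{h_{2}}}$ followed by coercivity of $a_{h_{2}}$ and the trace estimate for \eqref{eq3.74}. The only differences are cosmetic bookkeeping of the $\Gamma_{1}$ terms, which matches the paper's computation.
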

 \begin{proof}
To check  (\ref{eq3.72}) we take first  $v=u_{g_{1}h_{1}}(t)+(u_{g_{2}h_{2}}(t)-u_{g_{1}h_{1}}(t))^{+}$,
for $t\in [0 , T]$, in (\ref{iv2}) where $u= u_{g_{1}h_{1}}$, $g=g_{1}h_{1}$   and $h=h_{1}$, 
 then taking
  $v= u_{g_{2}h_{2}}(t)-(u_{g_{2}h_{2}}(t)-u_{g_{1}h_{1}}(t))^{+}$  in (\ref{iv2}) where $u= u_{g_{2}h_{2}}$, $g=g_{2}h_{2}$  and $h=h_{2}$,
adding the two obtained inequalities,  as
 \begin{eqnarray*}
\Phi(u_{g_{1}h_{1}}+(u_{g_{2}h_{2}}-u_{g_{1}h_{1}})^{+})
-\Phi(u_{g_{1}h_{1}})
+\Phi(u_{g_{2}h_{2}}-(u_{g_{2}h_{2}}-u_{g_{1}h_{1}})^{+}))-\Phi(u_{g_{2}h_{2}})= 0
 \end{eqnarray*}
 we get
 \begin{eqnarray*}
-{1\over 2}{\partial \over\partial t} \left(\|(u_{g_{2}h_{2}}-u_{g_{1}h_{1}})^{+}\|^{2}_{H}\right)
 -a(u_{g_{2}h_{2}} -u_{g_{1}h_{1}}  \, ,\, (u_{g_{2}h_{2}}-u_{g_{1}h_{1}})^{+} ) \nonumber\\
+ \int_{\Gamma_{1}} (h_{1} u_{g_{1}h_{1}} -   h_{2}u_{g_{2}h_{2}}) (u_{g_{2}h_{2}}-u_{g_{1}h_{1}})^{+} ds \geq
 (g_{1} - g_{2} \, ,\, (u_{g_{2}h_{2}}-u_{g_{1}h_{1}})^{+} )
 \nonumber\\
+
 (h_{1}-h_{2}) \int_{\Gamma_{1}}b (u_{g_{2}h_{2}}-u_{g_{1}h_{1}})^{+} ds,
\quad a.e.\, t\in ]0 , T[,
 \end{eqnarray*}
 so by integration on $]0 , T[$, we deduce
 \begin{eqnarray*}
{1\over 2}\|(u_{g_{2}h_{2}}(T)-u_{g_{1}h_{1}}(T))^{+}\|^{2}_{H}+
 \int_{0}^{T} a_{h_{2}}  ((u_{g_{2}h_{2}}-u_{g_{1}h_{1}})^{+}  \, ,\,
 (u_{g_{2}h_{2}}-u_{g_{1}h_{1}}(t))^{+} ) dt \leq
\nonumber\\
  \int_{0}^{T}(g_{2} -g_{1}\, ,\, (u_{g_{2}h_{2}}(t)-u_{g_{1}h_{1}})^{+} )dt
 +(h_{1}- h_{2}) \int_{0}^{T}\int_{\Gamma_{1}}(u_{g_{1}h_{1}}-b) (u_{g_{2}h_{2}}-u_{g_{1}h_{1}})^{+} dsdt,
 \end{eqnarray*}
and from (\ref{eq3.71}) we get (\ref{eq3.72}).
To check (\ref{eq3.73}), let $W= u_{g_{h}}(t)- u_{g}(t)$, and choose, in (\ref{iv2}), $v= u_{g_{h}}(t) -W^{+}(t)$, so a.e. $t\in ]0 , T[$
 \begin{eqnarray*}
\langle \dot{u}_{g_{h}}  \, ,\, W^{+}\rangle + a_{h}(u_{g_{h}} \, ,\, W^{+}) \leq
+\Phi(u_{g_{h}} -W^{+})-\Phi(u_{g_{h}})+
(g \, ,\, W^{+})  + h\int_{\Gamma_{1}}bW^{+} ds,
 \end{eqnarray*}
as $u_{g}= b$ on $\Gamma_{1}\times[0 , T]$ we obtain a.e. $t\in ]0 , T[$
\begin{eqnarray}\label{e5.11}
\langle \dot{u}_{g_{h}} \, ,\, W^{+}\rangle + a(u_{g_{h}} \, ,\, W^{+}) + h\int_{\Gamma_{1}} |W^{+}|^{2}ds
\leq ( g \, ,\, W^{+} ) +\Phi(u_{g_{h}} -W^{+})-\Phi(u_{g_{h}}).
 \end{eqnarray}
Then we choose, in (\ref{eq1}), $v= u_{g}(t) +W^{+}(t)$, which is in $K$ because from (\ref{eq3.71}) we have $W^{+}= 0$ on $\Gamma_{1}\times[0 , T]$,  so
 \begin{eqnarray}\label{e5.12}
 \langle \dot{u}_{g} , W^{+}(t) \rangle +  a( u_{g} , W^{+}) \geq ( g \, ,\, W^{+} ) -\Phi(u_{g} +W^{+})+\Phi(u_{g}),
 \quad a.e.\, t\in ]0 , T[.
 \end{eqnarray}
So from (\ref{e5.11}) and (\ref{e5.12}) we deduce that
\begin{eqnarray*}
{1\over 2}\|W^{+}(T)\|^{2}_{H}+ \int_{0}^{T}a(W^{+} , W^{+})dt + h\int_{\Gamma_{1}} |W^{+}|^{2}ds\nonumber\\
\leq  \Phi(u_{g_{h}} -W^{+})-\Phi(u_{g_{h}})+\Phi(u_{g} +W^{+})-\Phi(u_{g})= 0.
 \end{eqnarray*}
Then (\ref{eq3.73}) holds.
To finish the proof we must check  (\ref{eq3.74}).
We choose  $v= u_{g_{h_{1}}}(t)$ in (\ref{iv2}) where $u=u_{g_{h_{2}}}(t)$, then
choosing $v= u_{g_{h_{2}}}(t)$ in (\ref{iv2}) where $u=u_{g_{h_{1}}}(t)$,
 we get
 \begin{eqnarray*}\label{e5.15}
-\langle \dot{u}_{g_{h_{2}}} - \dot{u}_{g_{h_{1}}}\, ,\, u_{g_{h_{2}}}-u_{g_{h_{1}}}\rangle
- a(u_{g_{h_{2}}} - u_{g_{h_{1}}}\, ,\, u_{g_{h_{2}}}-u_{g_{h_{1}}})
\nonumber\\
-h_{2} \int_{\Gamma_{1}} u_{g_{h_{2}}}(u_{g_{h_{2}}}-u_{g_{h_{1}}}) ds
 + h_{1}\int_{\Gamma_{1}}u_{g_{h_{1}}}(u_{g_{h_{2}}}-u_{g_{h_{1}}})ds
  \geq \nonumber\\
 -(h_{2} -h_{1})\int_{\Gamma_{1}}b(u_{g_{h_{2}}}-u_{g_{h_{1}}}) ds,
 \quad a.e.\, t\in ]0 , T[,
 \end{eqnarray*}
then
\begin{eqnarray*}\label{e5.15}
{1\over 2}\|u_{g_{h_{2}}}(T)-u_{g_{h_{1}}}(T)\|_{H}^{2} +
\int_{0}^{T}a_{h_{2}}(u_{g_{h_{2}}} - u_{g_{h_{1}}}\, ,\, u_{g_{h_{2}}}-u_{g_{h_{1}}}) dt
   \nonumber\\ \leq
 (h_{1} -h_{2})\int_{0}^{T}\int_{\Gamma_{1}}(u_{g_{h_{1}}}-b) (u_{g_{h_{2}}}-u_{g_{h_{1}}}) dsdt.
 \end{eqnarray*}
So
\begin{eqnarray*}\label{e5.15}
{1\over 2}\|u_{g_{h_{2}}}-u_{g_{h_{1}}}\|_{L^{\infty}(0 , T , H)}^{2}
+\lambda_{1}\min\{1 , h_{2}\} \|u_{g_{h_{2}}} - u_{g_{h_{1}}}\|_{L^{2}(0 , T , V)}^{2}
 \nonumber\\
\leq\|\gamma_{0}\| (h_{1} -h_{2})\|b-  u_{g_{h_{1}}}\|_{L^{2}(0 , T , {{\bf L}^{2}(\Gamma_{1}))}}
\|u_{g_{h_{2}}}-u_{g_{h_{1}}}\|_{L^{2}(0 , T , V)}
 \end{eqnarray*}
where $\gamma_{0}$ is the  trace embedding  from $V$ to $L^{2}(\Gamma_{1})$.
Thus (\ref{eq3.74}) holds.
 \end{proof}

\section{Optimal Control problems and convergence for $h\to +\infty$}\label{secOPC}
In this  section, $b$ is not constant but a given function in $L^{2}(]0 , T[\times\Gamma_{1})$. We prove first the existence and uniqueness of the solution for   the optimal control problem  associated to the parabolic variational inequalities of second kind (\rm{\ref{eq1}}),  and for   the optimal control problem  associated also to
(\rm{\ref{iv2}}), then  in  Subsection \ref{secLim} we prove (see Lemma
\rm{\ref{l6.1}} and Theorem \rm{\ref{th6.1}})   the convergence of  the state $u_{{g_{op}}_{h}h}$ and
the optimal control ${g_{op}}_{h}$,
 when the  coefficient $h$ on $\Gamma_{1}$, goes to infinity.

The existence and uniqueness of the solution to the parabolic variational inequalities of second kind
 (\ref{eq1}) and (\ref{iv2}),
with the initial condition (\ref{ic1}), allow us to consider $g\mapsto u_{g}$ and
 $g\mapsto u_{g_{h}}$ as  functions from $L^{2}(0 , T, H)$ to $L^{2}(0 , T, V)$,  for all $h > 0$.

Using the monotony property (\ref{eq3.4}) and  (\ref{eq3.4h}), established in Theorem \ref{la}, we prove in the following  that $J$ and  $J_{h}$, defined by (\ref{e4.1}) and (\ref{Jh}),  are strictly convex applications on  $L^{2}(0 , T , H)$, so \cite{JLL} there exists  a unique solution $g_{op}$ in $L^{2}(0 , T , H)$   of the Problem  \rm{(\ref{P}), and there exists also a unique solution  $g_{op_{h}}$ in $L^{2}(0 , T , H)$ of  Problem \rm{(\ref{Ph})} for all $h>0$.

\begin{theorem}\label{th4.2}
Assume the same hypotheses of Proposition \ref{r2.3.2}. Then  $J$ and  $J_{h}$, defined by (\ref{e4.1}) and (\ref{Jh}) respectively,  are strictly  convex applications on  $L^{2}(0 , T , H)$, so there exist unique solutions $g_{op}$ and
$g_{op_{h}}$  in $L^{2}(0 , T, H)$ respectively of the Problems {\rm(\ref{P})} and
{\rm(\ref{Ph})}.
\end{theorem}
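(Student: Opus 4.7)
The plan is to split each cost functional into its state-dependent part $J_1(g)=\frac{1}{2}\|u_g\|_{L^{2}(0,T,H)}^{2}$ and its Tikhonov part $J_2(g)=\frac{M}{2}\|g\|_{L^{2}(0,T,H)}^{2}$, prove that $J_1$ is convex while $J_2$ is strictly convex, and then invoke the classical Lions theorem \cite{JLL}. The only genuinely nontrivial step is the convexity of $J_1$, since the parabolic variational inequality makes $g\mapsto u_g$ nonlinear; that step will rely decisively on the monotony property from Theorem \ref{la}, and on the nonnegativity provided by Proposition \ref{r2.3.2}.

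That $J_2$ is strictly convex on the Hilbert space $L^{2}(0,T,H)$ follows from the parallelogram identity and $M>0$. For $J_1$, I would fix $g_1,g_2$ satisfying the hypotheses of Proposition \ref{r2.3.2} and $\mu\in[0,1]$. Theorem \ref{la} gives $u_{4}(\mu)\le u_{3}(\mu)$ pointwise on $\Omega\times[0,T]$, and Proposition \ref{r2.3.2} gives $u_{g_1},u_{g_2}\ge 0$, hence $u_{3}(\mu)\ge 0$, and also $u_{4}(\mu)=u_{\mu g_1+(1-\mu)g_2}\ge 0$ because $\mu g_1+(1-\mu)g_2\ge 0$. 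Squaring the pointwise sandwich $0\le u_{4}(\mu)\le u_{3}(\mu)$ (which is legal precisely because both sides are nonnegative) and integrating over $\Omega\times[0,T]$ yields
\begin{equation*}
\|u_{4}(\mu)\|_{L^{2}(0,T,H)}^{2}\le\|u_{3}(\mu)\|_{L^{2}(0,T,H)}^{2}\le\mu\|u_{g_1}\|_{L^{2}(0,T,H)}^{2}+(1-\mu)\|u_{g_2}\|_{L^{2}(0,T,H)}^{2},
\end{equation*}
where the second inequality is the standard convexity of $v\mapsto\|v\|^{2}$ applied to $u_{3}(\mu)=\mu u_{g_1}+(1-\mu)u_{g_2}$. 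This is precisely $J_1(\mu g_1+(1-\mu)g_2)\le\mu J_1(g_1)+(1-\mu)J_1(g_2)$, so $J_1$ is convex, and therefore $J=J_1+J_2$ is strictly convex.

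With $J$ strictly convex, continuous (continuity of $g\mapsto u_g$ in the $L^{2}(0,T,H)$-norm is a direct consequence of Lemma \ref{l1}), and coercive (since $J(g)\ge\frac{M}{2}\|g\|_{L^{2}(0,T,H)}^{2}$), existence and uniqueness of the minimizer $g_{op}$ follow from the classical convex minimization result in \cite{JLL}. The same scheme, now using the monotony property (\ref{eq3.4h}) for Problem $(P_h)$ together with the analogous continuity estimate for $g\mapsto u_{g_h}$ (obtained by repeating the proof of Lemma \ref{l1} with $a$ replaced by $a_h$ and $\lambda$ by $\lambda_h$), yields existence and uniqueness of $g_{op_h}$ for each $h>0$.

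The main obstacle is the convexity of $J_1$: without Theorem \ref{la} there is no substitute for the missing linearity of $g\mapsto u_g$, and without Proposition \ref{r2.3.2} one could not pass from a pointwise sandwich to an $L^{2}$-norm sandwich after squaring, because $a^{2}\le b^{2}$ requires $0\le a\le b$. Theorem \ref{la} and Proposition \ref{r2.3.2} together supply exactly what is needed; once the convexity of $J_1$ is established, the strict convexity of $J$ (and $J_h$) is immediate and the conclusion follows from standard convex-analysis tools.
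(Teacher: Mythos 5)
Your proposal is correct and follows essentially the same route as the paper: the heart of both arguments is the sandwich $0\le u_{4}(\mu)\le u_{3}(\mu)$ supplied by Proposition \ref{r2.3.2} together with Theorem \ref{la}, which yields $\|u_{4}(\mu)\|_{L^{2}(0,T,H)}^{2}\le\|u_{3}(\mu)\|_{L^{2}(0,T,H)}^{2}$, with strictness carried entirely by the term ${M\over 2}\|g\|_{L^{2}(0,T,H)}^{2}$. The only cosmetic difference is that you split $J$ into a convex state part plus a strictly convex Tikhonov part, whereas the paper writes the exact identity for $\mu J(g_{1})+(1-\mu)J(g_{2})-J(g_{3}(\mu))$, which amounts to the same estimate.
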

\begin{proof}
Let $u=u_{g_{i}}$ and  $u_{g_{i}h}$ be respectively the solution of
the variational inequalities {\rm(\ref{eq1})}  and {\rm(\ref{iv2})}
with $g=g_{i}$   for $i=1 , 2$.
We have
\begin{eqnarray*}
\|u_{3}(\mu)\|_{L^{2}(0 , T, H)}^{2} = \mu^{2} \|u_{g_{1}}\|_{L^{2}(0 , T, H)}^{2} + (1-\mu)^{2}\|u_{g_{2}}\|_{L^{2}(0 , T, H)}^{2}
+ 2 \mu(1-\mu)(u_{g_{1}} , u_{g_{2}})
\end{eqnarray*}
then the following equalities hold
\begin{eqnarray}\label{4.9}
\|u_{3}(\mu)\|_{L^{2}(0 , T, H)}^{2} = \mu \|u_{g_{1}}\|_{L^{2}(0 , T, H)}^{2} +
(1-\mu)\|u_{g_{2}}\|_{L^{2}(0 , T, H)}^{2} \nonumber\\
-\mu(1-\mu)\|u_{g_{2}}-
u_{g_{1}}\|_{L^{2}(0 , T, H)}^{2},
\end{eqnarray}
\begin{eqnarray}\label{4.9h}
\|u_{3h}(\mu)\|_{L^{2}(0 , T, H)}^{2} = \mu \|u_{g_{1}h}\|_{L^{2}(0 , T, H)}^{2} + (1-\mu)\|u_{g_{2}h}\|_{L^{2}(0 , T, H)}^{2}
\nonumber\\
-\mu(1-\mu)\|u_{g_{2}h}- u_{g_{1}h}\|_{L^{2}(0 , T, H)}^{2}.
\end{eqnarray}
Let now $\mu\in [0 , 1]$ and   $g_{1}, g_{2} \in L^{2}(0 , T, H)$ so
\begin{eqnarray*}
\mu J(g_{1})+ (1-\mu)J(g_{2})- J(g_{3}(\mu))= {\mu \over
2}\|u_{g_{1}}\|_{L^{2}(0 , T, H)}^{2} + {(1-\mu) \over 2}\|u_{g_{2}}\|_{L^{2}(0 , T, H)}^{2}
\nonumber\\
-{1 \over 2}\|u_{4}(\mu)\|_{L^{2}(0 , T, H)}^{2} +{M \over 2}\left\{\mu
\|g_{1}\|_{L^{2}(0 , T, H)}^{2} +(1-\mu)\|g_{2}\|_{L^{2}(0 , T, H)}^{2}
-\|g_{3}(\mu)\|_{L^{2}(0 , T, H)}^{2}\right\}
\end{eqnarray*}
using (\ref{4.9}) and  $g_{3}(\mu)= \mu g_{1} + (1-\mu)g_{2}$ we obtain
\begin{eqnarray}\label{str}
\mu J(g_{1})+ (1-\mu)J(g_{2})- J(g_{3}(\mu))=
{1 \over 2}\left(\|u_{3}(\mu)\|_{L^{2}(0 , T, H)}^{2}  -\|u_{4}(\mu)\|_{L^{2}(0 , T, H)}^{2}\right)
\nonumber\\
+{1 \over 2}\mu(1-\mu) \|u_{1}-u_{2}\|_{L^{2}(0 , T, H)}^{2}
+{M \over 2}\mu(1-\mu) \|g_{1}-g_{2}\|_{L^{2}(0 , T, H)}^{2},
\end{eqnarray}
for all $\mu\in ]0 , 1[$ and  for all $g_{1}, g_{2}$ in $L^{2}(0 , T, H)$.
 From Proposition \ref{r2.3.2} we have $u_{4}(\mu) \geq 0$ in $\Omega\times [0 , T]$ for all $\mu\in [0 , 1]$,
so using the  monotony property (\ref{eq3.4}) (Theorem \ref{la})
and   we deduce
\begin{eqnarray}\label{n1}
 \|u_{4}(\mu)\|_{L^{2}(0 , T, H)}^{2}\leq \|u_{3}(\mu)\|_{L^{2}(0 , T, H)}^{2}.
\end{eqnarray}
Finally  from  {\rm(\ref{str})} the cost functional  $J$ is strictly convex, thus \cite{JLL} the
 uniqueness of the optimal control of the problem (\ref{P}) holds.

The uniqueness of the optimal control of the problem (\ref{Ph}) follows using the analogous
 inequalities (\ref{str})-(\ref{n1}) for any $h>0$.
\end{proof}

\subsection{Convergence when $h\to +\infty$}\label{secLim}
In this last subsection we study the convergence of  the state $u_{{g_{op}}_{h}h}$ and the optimal control ${g_{op}}_{h}$,
when the  coefficient $h$ on $\Gamma_{1}$, goes to infinity. For a given $g$ in $L^{2}(0 , T , H)$ we have first the
following estimate which generalizes \cite{DT, Ta1979}.

\begin{lemma}\label{l6.1}  Let $u_{g_{h}}$ be the unique solution of the parabolic variational inequality
  {\rm(\ref{iv2})}
and $u_{g}$ the unique solution of the parabolic variational inequality  {\rm(\ref{eq1})}, then
 $$u_{g_{h}}\to u_{g}\in L^{2}(0 , T , V) \mbox{ strongly  as } h\to +\infty, \qquad \forall g\in L^{2}(0 , T , H).$$
\end{lemma}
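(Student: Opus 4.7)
The strategy is a classical penalization argument: as $h\to+\infty$, the Robin term in Problem $(P_h)$ increasingly penalizes any deviation of $u_{g_h}$ from $b$ on $\Gamma_1$, forcing the weak limit to satisfy the Dirichlet condition of Problem $(P)$.

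First, I would derive $h$-uniform a priori estimates on $u_{g_h}$. Fix any lifting $\tilde b\in L^2(0,T;V)$ of $b$ (with trace $b$ on $\Gamma_1$), take $v=\tilde b(t)$ as a test function in (\ref{iv2}), and exploit the identity
$$
a_h(u_{g_h},u_{g_h}-\tilde b)-h\!\int_{\Gamma_1}\!b\,(u_{g_h}-\tilde b)\,ds
= a(u_{g_h},u_{g_h}-\tilde b)+h\!\int_{\Gamma_1}\!(u_{g_h}-b)^{2}\,ds.
$$
After integration in time, Cauchy--Schwarz and Young inequalities, one obtains, uniformly for $h\ge h_0>0$,
$$
\|u_{g_h}\|_{L^\infty(0,T;H)}^{2}+\|u_{g_h}\|_{L^2(0,T;V)}^{2}+h\,\|u_{g_h}-b\|_{L^2(0,T;L^2(\Gamma_1))}^{2}\le C,
$$
with $C$ independent of $h$. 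The third bound is the crucial one.

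Next, I would extract a subsequence $u_{g_{h_n}}\rightharpoonup w$ weakly in $L^2(0,T;V)$ and weak-$\star$ in $L^\infty(0,T;H)$. The penalty bound forces $u_{g_{h_n}}{\mid_{\Gamma_1}}\to b$ strongly in $L^2(]0,T[\times\Gamma_1)$, hence $w\in L^2(0,T;K_b)$. To identify $w$ with $u_g$, test (\ref{iv2}) with any $v\in L^2(0,T;K_b)$: since $(v-u_{g_h})\mid_{\Gamma_1}=b-u_{g_h}\mid_{\Gamma_1}$, the Robin contribution equals $-h\int_{\Gamma_1}(u_{g_h}-b)^{2}\,ds\le 0$ and may be dropped from the left-hand side. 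Integrating in time and passing to the $\liminf$ (using weak l.s.c.\ of $\Phi$ on $L^2(0,T;V)$, weak convergence of the bilinear and linear terms, together with the usual integration-by-parts trick for $\int_0^T\langle\dot u_{g_h},v-u_{g_h}\rangle\,dt$), one finds that $w$ solves (\ref{eq1}) with $w(0)=u_b$; by uniqueness, $w=u_g$ and the whole family converges weakly.

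For the strong convergence, I would finally test (\ref{iv2}) with the admissible choice $v=u_g(t)\in K_b\subset V$. Since $u_g\mid_{\Gamma_1}=b$, the Robin term again reduces to $-h\int_{\Gamma_1}(u_{g_h}-b)^{2}\,ds\le 0$ and may be discarded. Splitting $a(u_{g_h},u_g-u_{g_h})=-a(u_{g_h}-u_g,u_{g_h}-u_g)+a(u_g,u_g-u_{g_h})$ and using coercivity yields
$$
\lambda\int_0^T\!\|u_{g_h}-u_g\|_V^{2}\,dt\;\le\;\int_0^T\!\Bigl[a(u_g,u_g-u_{g_h})-(g,u_g-u_{g_h})+\Phi(u_g)-\Phi(u_{g_h})+\langle\dot u_{g_h},u_g-u_{g_h}\rangle\Bigr]dt.
$$
The first two terms tend to zero by weak convergence; the third has non-positive $\limsup$ by weak l.s.c.\ of $\Phi$; for the fourth I split $\dot u_{g_h}=(\dot u_{g_h}-\dot u_g)+\dot u_g$ and use, thanks to the common initial datum $u_b$,
$$
\int_0^T(\dot u_{g_h}-\dot u_g,\,u_g-u_{g_h})_H\,dt=-\tfrac12\|u_{g_h}(T)-u_g(T)\|_H^{2}\le 0,
$$
while $\int_0^T(\dot u_g,u_g-u_{g_h})_H\,dt\to 0$ by weak convergence in $L^2(0,T;H)$. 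Hence $\limsup\int_0^T\|u_{g_h}-u_g\|_V^{2}\,dt\le 0$, which gives the claimed strong convergence.

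The main delicate step is the passage to the limit in Step~2, specifically controlling the term $\int_0^T\langle\dot u_{g_h},v-u_{g_h}\rangle\,dt$ when $v$ has limited time regularity; this is handled by the standard integration-by-parts argument combined with a density argument on $v$, as in \cite{DL1972, MT}. Note that the $L^2(0,T;V)$-strong convergence also improves the conclusion stated, matching what is announced in the abstract.
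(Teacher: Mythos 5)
Your overall strategy is the same as the paper's: an $h$-uniform a priori estimate including the penalty bound $h\,\|u_{g_{h}}-b\|^{2}_{L^{2}(0,T;L^{2}(\Gamma_{1}))}\le C$, extraction of a weak limit, identification of that limit with $u_{g}$ by discarding the nonpositive Robin contribution for test functions whose trace on $\Gamma_{1}$ is $b$ and invoking uniqueness of (\ref{eq1})--(\ref{ic1}), and finally strong convergence by testing (\ref{iv2}) with $v=u_{g}(t)$ and combining weak convergence of the linear terms with weak lower semicontinuity of $\Phi$. The only real methodological difference is in the identification step: you treat $\int_{0}^{T}\langle\dot u_{g_{h}},v-u_{g_{h}}\rangle\,dt$ by the classical integration-by-parts/density trick, while the paper derives an $h$-uniform bound on $\dot u_{g_{h}}$ in $L^{2}(0,T,V_{2}')$ (testing with $u_{g_{h}}\pm\varphi$), extends by Hahn--Banach, and then uses its Lebesgue-point argument (\ref{i1})--(\ref{e1}); both routes are standard and either one works here.

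There is, however, one step that does not go through as literally written: in your first estimate you test with an \emph{arbitrary} lifting $\tilde b\in L^{2}(0,T;V)$ of $b$. The resulting term $\int_{0}^{T}\langle\dot u_{g_{h}},u_{g_{h}}-\tilde b\rangle\,dt$ contains $-\int_{0}^{T}\langle\dot u_{g_{h}},\tilde b\rangle\,dt$, which cannot be controlled by Cauchy--Schwarz and Young uniformly in $h$, since at that stage no $h$-uniform bound on $\dot u_{g_{h}}$ is available; nor can you integrate by parts in time, because a lifting of $b\in L^{2}(]0,T[\times\Gamma_{1})$ that is merely $L^{2}$ in time has no usable time derivative. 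The repair is exactly the paper's choice of test function: take $\tilde b=u_{g}$, whose time derivative lies in $L^{2}(0,T;H)$ by the regularity recalled in the introduction; then $\int_{0}^{T}\langle\dot u_{g_{h}},u_{g_{h}}-u_{g}\rangle\,dt=\frac{1}{2}\|u_{g_{h}}(T)-u_{g}(T)\|_{H}^{2}+\int_{0}^{T}(\dot u_{g},u_{g_{h}}-u_{g})\,dt$, and the last term is absorbed by Young's inequality into the coercivity term. With this modification your Step 1 coincides with the paper's first estimate, and the remainder of your argument (weak limit, identification, strong convergence via $v=u_{g}$, including the sign argument $\int_{0}^{T}(\dot u_{g_{h}}-\dot u_{g},u_{g}-u_{g_{h}})\,dt\le 0$ from the common initial datum) is sound and matches the paper's proof.
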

\begin{proof}
We take  $v=u_{g}(t)$ in (\ref{iv2}) where $u= u_{g_{h}}$, and recalling that $u_{g}(t)= b$ on $\Gamma_{1}\times]0 , T[$,
 taking $ u_{g_{h}}(t)- u_{g}(t)= \phi_{h}(t)$   we obtain  for $h>1$,
 a.e. $t\in ]0 , T[$
\begin{eqnarray*}
\langle \dot{\phi_{h}}, \phi_{h}\rangle +
a_{1}(\phi_{h} \, , \, \phi_{h}) + (h-1)\int_{\Gamma_{1}}  |\phi_{h}|^{2}ds
\leq
-\langle \dot{u}_{g} , \phi_{h}\rangle
-a(u_{g}, \phi_{h})
+ ( g , \phi_{h})
    +\Phi(\phi_{h}),
\end{eqnarray*}
so we deduce that
$${1\over 2}\| \phi_{h}\|^{2}_{L^{\infty}(0 , T, H)}
+  \|\phi_{h}\|^{2}_{L^{2}(0 , T , V)}+  (h-1)\|\phi_{h}\|^{2}_{L^{2}(0 , T,  L^{2}(\Gamma_{1}))}$$
is bounded for all $h>1$,
then $\|u_{g_{h}}\|_{L^{2}(0 , T , V)}\leq\|\phi_{h}\|_{L^{2}(0 , T , V)}+ \|u_{g}\|_{L^{2}(0 , T , V)}$
is also bounded for all $h>1$. So there exists $\eta\in L^{2}(0 , T , V)$
  such that
$u_{g_{h}}\tow \eta \mbox{ weakly in } L^{2}(0 , T , V)$ and
$u_{g_{h}}\to b$   strongly on  $\Gamma_{1}$ when  $h\to +\infty$  so  $\eta(0)= b$.

Let $\varphi\in L^{2}(0 , T , V_{2})$ and taking in (\ref{iv2}) where
$u= u_{g_{h}}$,
$v= u_{g_{h}}(t) \pm \varphi(t)$, we obtain
\begin{eqnarray*}
\langle \dot{u}_{g_{h}} , \varphi\rangle = -a(u_{g_{h}} , \varphi)
+(g  , \varphi)   \quad a.e. \, t\in ]0 , T[.
\end{eqnarray*}
As   $\|u_{g_{h}}\|_{L^{2}(0 , T , V)}$
is bounded for all $h>1$, we deduce that
$\|\dot{u}_{g_{h}}\|_{L^{2}(0 , T , V'_{2})}$ is also bounded for all $h>1$.
 Following the proof of Lemma 2.3, we conclude that
 \begin{eqnarray}\label{eqW2}
\left.
\begin{array}{ll}
u_{g_{h}}  \tow \eta  \mbox{ in } L^{2}(0 , T, V)\mbox{ weak,  }
 \mbox{ and in   } L^{\infty}(0 , T, H)
\mbox{ weak star,} \\
\mbox{ and }
 \dot{u}_{g_{n}}\tow \dot{\eta} \mbox{ in } L^{2}(0 , T, V') \mbox{ weak}.
\end{array}
\right\}
\end{eqnarray}

From (\ref{iv2}) and taking $v\in K$ so $v= b$ on $\Gamma_{1}$, we obtain
\begin{eqnarray*}
\langle \dot{u}_{g_{h}}, v- u_{g_{h}}\rangle +
a(u_{g_{h}} , v- u_{g_{h}}) -  h\int_{\Gamma_{1}} |u_{g_{h}}- b|^{2}ds  \geq
\nonumber\\  \Phi(u_{g_{h}}) - \Phi(v) + ( g , v - u_{g_{h}})
 \qquad \forall v\in K, \quad a.e. \, t\in ]0 , T[,
\end{eqnarray*}
then
\begin{eqnarray}\label{eq6.1}
\langle \dot{u}_{g_{h}}, v- u_{g_{h}}\rangle +
a(u_{g_{h}} , v- u_{g_{h}})  \geq
  \Phi(u_{g_{h}}) - \Phi(v) + ( g , v - u_{g_{h}})
 \quad \forall v\in K, \, a.e. \, t\in ]0 , T[.          \quad
\end{eqnarray}
So with (\ref{eqW2}) and  the same arguments as in (\ref{i1})- (\ref{e1}), we obtain
\begin{eqnarray*}
\langle \dot{\eta}, v- \eta\rangle  +
a(\eta , v- \eta)   +\Phi(v)- \Phi(\eta) \geq ( g  , v - \eta)
 \quad \forall v\in K , \quad a.e. \, t\in ]0 , T[.
\end{eqnarray*}
and $\eta(0)=b$.
Using the uniqueness of the solution of (\ref{eq1})-(\ref{ic1}) we get that $\eta= u_{g}$.

To prove the strong convergence, we  take $v= u_{g}(t)$ in (\ref{iv2})
\begin{eqnarray*}
\langle \dot{u}_{g_{h}}  , u_{g} - u_{g_{h}} \rangle  +
a_{h}(u_{g_{h}}  , u_{g} - u_{g_{h}} ) +\Phi(u_{g})- \Phi(u_{g_{h}})  \geq
( g  ,  u_{g} - u_{g_{h}} )
\nonumber\\
 + h\int_{\Gamma_{1}} b (u_{g}  - u_{g_{h}} )ds,  \quad a.e. \, t\in ]0 , T[
\end{eqnarray*}
thus as $u_{g}= b$ on $\Gamma_{1}\times]0 , T[$,
we put $u_{g_{h}} - u_{g}= \phi_{h}$, so   a.e. $t\in ]0 , T[$
\begin{eqnarray*}
\langle \dot{\phi_{h}} \, ,\,  \phi_{h}\rangle  +
a(\phi_{h} , \phi_{h})
+ h \int_{\Gamma_{1}} |\phi_{h}|^{2}ds + \Phi(u_{g_{h}})-\Phi(u_{g})\leq
\langle \dot{u}_{g} , \phi_{h}\rangle
 +
 a( u_{g} \, ,\, \phi_{h})  + ( g ,  \phi_{h}),
\end{eqnarray*}
so
\begin{eqnarray*}
&&{1\over 2}\|\phi_{h}\|_{L^{\infty}(0 , T, H)}^{2}  +
\lambda_{h}\|\phi_{h}\|_{L^{2}(0 , T , V)}^{2}
 + \Phi(u_{g_{h}})-\Phi(u_{g})\leq
-\int_{0}^{T}\langle \dot{u}_{g}(t), \phi_{h}(t)\rangle dt
\nonumber\\
&&-\int_{0}^{T} a(u_{g}(t), \phi_{h}(t)dt + \int_{0}^{T}( g(t)  , \phi_{h}(t)dt,
\end{eqnarray*}
using the weak semi-continuity of $\Phi$ and the weak convergence (\ref{eqW})  the right side
of the just above inequality tends to zero when $h\to +\infty$, then we deduce the strong convergence
of $\phi_{h}=u_{g_{h}}-u_{g}$ to $0$ in $L^{2}(0 , T , V)\cap L^{\infty}(0 , T, H) $, for all
$g\in L^{2}(0 , T , H)$.
This ends the proof.
\end{proof}

We  give now, without need to use the notion of adjoint states \cite{JLL}, the convergence
 result which  generalizes the result obtained
 in \cite{MT} for a  parabolic variational equations (see also
\cite{arada2000, belgacem2003, GT, GT2008}).

\begin{theorem}\label{th6.1}
Let $u_{{g_{op}}_{h}h}$, ${g_{op}}_{h}$  and $u_{g_{op}}$, $g_{op}$ be respectively
 the states and the optimal control defined in the problems {\rm(\ref{P})}
 and {\rm(\ref{Ph})}.
Then
\begin{eqnarray}\label{6.1}
 \lim_{h\to +\infty}\|u_{g_{op_{h}h}}-u_{g_{op}}\|_{L^{2}(0 , T , V)}&=&
 \lim_{h\to +\infty}\|u_{g_{op_{h}h}}-u_{g_{op}}\|_{L^{\infty}(0 , T , H)} ,
\nonumber\\
&=& \lim_{h\to +\infty}\|u_{g_{op_{h}h}}-u_{g_{op}}\|_{L^{2}(0 , T , L^{2}(\Gamma_{1}))}= 0,
\end{eqnarray}
\begin{equation}\label{6.2}
 \lim_{h\to +\infty}\|g_{op_{h}}-g_{op}\|_{L^{2}(0 , T , H)}= 0.
\end{equation}
\end{theorem}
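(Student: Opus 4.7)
The strategy is the standard three-step $\Gamma$-convergence/penalization argument for optimal controls: derive uniform bounds on the minimizers $g_{op_h}$, extract weak limits and identify them, then upgrade weak to strong convergence via equality of norms in the limit. The key inputs will be Lemma \ref{l6.1} (applied at the fixed control $g = g_{op}$), the uniqueness of solutions for both state equations, and the uniqueness of the optimal control given by Theorem \ref{th4.2}.

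\emph{Step 1 (uniform bounds).} First I would exploit the optimality of $g_{op_h}$ by writing $J_h(g_{op_h}) \le J_h(g_{op})$. By Lemma \ref{l6.1}, $u_{g_{op}h} \to u_{g_{op}}$ strongly in $L^{2}(0,T,V)\cap L^\infty(0,T,H)$, hence $J_h(g_{op})$ is bounded uniformly in $h>1$. This forces $\|g_{op_h}\|_{L^2(0,T,H)}$ and $\|u_{g_{op_h}h}\|_{L^2(0,T,H)}$ to be uniformly bounded. Then I would feed this into the energy estimate for (\ref{iv2}) (taking $v=u_b$, as in the proof of Lemma \ref{l6.1}) to obtain uniform bounds for $u_{g_{op_h}h}$ in $L^{2}(0,T,V)\cap L^{\infty}(0,T,H)$ and, crucially, for $\sqrt{h}\,\|u_{g_{op_h}h}-b\|_{L^{2}(0,T,L^{2}(\Gamma_{1}))}$. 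This last estimate is what will eventually enforce the Dirichlet condition on the limit.

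\emph{Steps 2--3 (weak limits and identification).} Up to subsequence, $g_{op_h}\rightharpoonup \tilde g$ in $L^2(0,T,H)$ and $u_{g_{op_h}h}\rightharpoonup \tilde u$ in $L^2(0,T,V)$; the $O(1/\sqrt h)$ trace estimate gives $\tilde u|_{\Gamma_1}=b$, i.e. $\tilde u\in L^2(0,T,K_b)$. To identify $\tilde u = u_{\tilde g}$, I would test (\ref{iv2}) against arbitrary $v\in L^2(0,T,K_b)$: the dangerous singular boundary term $h\int_{\Gamma_1}(u_{g_{op_h}h}-b)(v-u_{g_{op_h}h})ds = -h\int_{\Gamma_1}(u_{g_{op_h}h}-b)^2 ds$ has a favorable sign and can be dropped when written on the correct side of the inequality. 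Passing to the limit using weak lower semi-continuity of $a(\cdot,\cdot)$ and $\Phi$, and mimicking the time-derivative treatment used in the proof of Theorem \ref{la}, yields that $\tilde u$ satisfies (\ref{eq1}) with datum $\tilde g$ in $L^2(0,T,K_b)$, so by uniqueness $\tilde u = u_{\tilde g}$.

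\emph{Step 4 (optimality and strong convergence).} By weak lower semi-continuity of $J$ and Lemma \ref{l6.1},
$$J(\tilde g)\le\liminf_{h\to+\infty}J_h(g_{op_h})\le\limsup_{h\to+\infty}J_h(g_{op_h})\le\lim_{h\to+\infty}J_h(g_{op})=J(g_{op}),$$
so $\tilde g = g_{op}$ by uniqueness in Theorem \ref{th4.2}, and the whole sequence converges weakly. Moreover all inequalities above are equalities, which forces $\|u_{g_{op_h}h}\|_{L^2(0,T,H)}\to\|u_{g_{op}}\|_{L^2(0,T,H)}$ and $\|g_{op_h}\|_{L^2(0,T,H)}\to\|g_{op}\|_{L^2(0,T,H)}$; combined with weak convergence, this yields strong convergence of $g_{op_h}$ in $L^2(0,T,H)$. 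The strong $L^2(0,T,V)\cap L^\infty(0,T,H)\cap L^2(0,T,L^2(\Gamma_1))$ convergence of the states (\ref{6.1}) then follows by reproducing the energy inequality of Lemma \ref{l6.1} with $g=g_{op_h}$, whose right-hand side tends to zero thanks to the just-proved strong convergence of $g_{op_h}$ and the weak convergence already established.

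\emph{Main obstacle.} The delicate point is the joint limit in Step 3: the control varies with $h$ (only weakly) while simultaneously the boundary penalization $h\to\infty$ must convert the Robin condition into a Dirichlet one. The two effects decouple cleanly only because the $\sqrt h$-trace estimate comes from Step 1 independently of $g_{op_h}$, and because choosing test functions in $K_b$ kills the non-lower-semicontinuous $h\int_{\Gamma_1}b(v-u_{g_{op_h}h})ds$ term by absorbing it into the sign-definite penalization. Once this decoupling is correctly set up, the rest of the argument is a standard weak-to-strong upgrade.
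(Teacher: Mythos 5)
Your plan follows essentially the same route as the paper's proof: uniform bounds from optimality of $g_{op_{h}}$ (the paper compares with $g=0$ rather than $g_{op}$), weak limits with the $\sqrt{h}$-weighted trace estimate forcing the Dirichlet condition, identification of the limit state by dropping the sign-definite penalization against test functions with trace $b$, identification of the limit control via $J(\tilde g)\le\liminf_{h}J_{h}(g_{op_{h}})\le\lim_{h}J_{h}(g_{op})=J(g_{op})$ using Lemma \ref{l6.1} and the uniqueness from Theorem \ref{th4.2}, and finally the weak-to-strong upgrade through convergence of norms. The only deviations are minor: the paper gets the trace estimate by testing (\ref{iv2}) with $v=u_{g_{op}}\in K_{b}$ rather than $v=u_{b}$ (your choice works only if $u_{b}=b$ on $\Gamma_{1}$, so use a test function with trace $b$ there), and it proves strong convergence of the states first (testing with $v=\eta$) and then deduces (\ref{6.2}), whereas you obtain strong convergence of the controls first and then the states — both orders are workable, and yours in fact handles the term $\int_{0}^{T}(g_{op_{h}},u_{g_{op_{h}h}}-\eta)\,dt$ more cleanly.
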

\begin{proof}
We have first
\begin{eqnarray*}
 J_{h}(g_{op_{h}})= {1\over 2}\|u_{g_{op_{h}h}}\|_{L^{2}(0 , T , H)}^{2} + {M\over 2}\|g_{op_{h}}\|_{L^{2}(0 , T , H)}^{2}
\leq {1\over 2}\|u_{g_{h}}\|_{L^{2}(0 , T , H)}^{2} + {M\over 2}\|g\|_{L^{2}(0 , T , H)}^{2},
\end{eqnarray*}
for all $g\in L^{2}(0 , T , H)$,
then for $g=0\in L^{2}(0 , T , H)$ we obtain that
\begin{eqnarray}\label{e6.3}
J_{h}(g_{op_{h}})= {1\over 2}\|u_{g_{op_{h}h}}\|_{L^{2}(0 , T , H)}^{2} + {M\over 2}\|g_{op_{h}}\|_{L^{2}(0 , T , H)}^{2}\leq {1\over 2}\|u_{0_{h}}\|_{L^{2}(0 , T , H)}^{2}
\end{eqnarray}
where $u_{0_{h}}\in L^{2}(0 , T , V)$  is the solution of the following parabolic variational inequality
$$
\langle \dot{u}_{0_{h}} , v- u_{0_{h}}\rangle +
 a_{h}( u_{0_{h}} , v- u_{0_{h}}) +\Phi(v)- \Phi(u_{0_{h}})
\geq   h\int_{\Gamma_{1}} b(v- u_{0_{h}})ds, \quad a.e. \, t\in ]0 , T[
$$
for all $v\in V$ and $u_{0_{h}}(0)= u_{b}$. Taking $v= u_{b}\in K$  we get that
   $\|u_{0_{h}}- u_{b}\|_{L^{2}(0 , T , V)}$ is bounded independently of $h$, then
  $\|u_{0_{h}}\|_{L^{2}(0 , T , H)}$ is bounded independently of $h$. So we deduce with (\ref{e6.3}) that
 $\|u_{g_{op_{h}h}}\|_{L^{2}(0 , T , H)}$ and $\|g_{op_{h}}\|_{L^{2}(0 , T , H)}$ are also bounded independently
of $h$. So there exists $f$ and $\eta$ in  $L^{2}(0 , T , H)$ such that
\begin{eqnarray}\label{6.5}
 g_{op_{h}} \tow  f \quad in \quad L^{2}(0 , T , H) \quad (weak) \quad {\rm and }\quad u_{g_{op_{h}h}}\tow\eta
\quad in \quad L^{2}(0 , T , H) \quad (weak).
\end{eqnarray}
Taking  now $v=u_{g_{op}}(t)\in K$ in  (\ref{iv2}), for $t\in ]0 , T[$,  with $u= u_{g_{op_{h}}h}$
and $g=g_{op_{h}}$, we obtain
\begin{eqnarray*}
\langle \dot{u}_{g_{op_{h}}h} , u_{g_{op}} - u_{g_{op_{h}h}}\rangle +
 a_{1}( u_{g_{op_{h}}h} , u_{g_{op}} - u_{g_{op_{h}h}})
\nonumber\\
+(h-1) \int_{\Gamma_{1}}u_{g_{op_{h}h}} (u_{g_{op}}  - u_{g_{op_{h}h}} )ds
+ \Phi(u_{g_{op}})- \Phi(u{g_{op_{h}h}})
\geq
\nonumber\\
 ( g_{op_{h}}  , u_{g_{op}}  - u_{g_{op_{h}}h} )
+
h\int_{\Gamma_{1}} b (u_{g_{op}}  - u_{g_{op_{h}}h} )ds, \quad a.e. \, t\in ]0 , T[
\end{eqnarray*}
as $ u_{g_{op}}= b$ on $\Gamma_{1}\times[0 , T]$, taking
$u_{g_{op}} - u_{g_{op_{h}}h}=\phi_{h}$
 we obtain
\begin{eqnarray*}
\langle \dot{\phi_{h}} , \phi_{h}\rangle
 +a_{1}(\phi_{h} , \phi_{h})
+(h-1) \int_{\Gamma_{1}} |\phi_{h}|^{2}ds
\leq -(g_{op_{h}}  , \phi_{h} ) \nonumber\\
+ \int_{\Gamma_{2}} q  |\phi_{h}| ds
+\langle \dot{u}_{g_{op}} , \phi_{h}\rangle +
a( u_{g_{op}} ,\phi_{h}), \quad a.e. \, t\in ]0 , T[
\end{eqnarray*}
then
\begin{eqnarray*}
{1\over 2}\|\phi_{h}\|_{L^{\infty}(0 , T, H)}^{2}
+ \lambda_{1}\|\phi_{h}\|_{L^{2}(0 , T , V)}^{2}
+(h-1) \int_{0}^{T}\int_{\Gamma_{1}}|\phi_{h}(t)|^{2}ds dt
\nonumber\\ \leq
-\int_{0}^{T}(g_{op_{h}}(t)  , \phi_{h}(t)) dt
+ \int_{0}^{T}\int_{\Gamma_{2}} q  |\phi_{h}(t)|dsdt
+\int_{0}^{T}\langle \dot{u}_{g_{op}}(t) , \phi_{h}(t)\rangle dt
\nonumber\\+
\int_{0}^{T}a( u_{g_{op_{h}}h}(t) , \phi_{h}(t))dt.
\end{eqnarray*}
There exists a constant $C>$ which does not depend on $h$ such that
\begin{eqnarray*}\label{6.q}
\|\phi_{h}\|_{L^{2}(0 , T , V)}= \|u_{g_{op_{h}}h} -u_{g_{op}}\|_{L^{2}(0 , T , V)}\leq C,
\quad \|\phi_{h}\|_{L^{\infty}(0 , T, H)}\leq C
\nonumber\\
\mbox{ and  }
 (h-1)\int_{0}^{T}\int_{\Gamma_{1}} |u_{g_{op_{h}}h}- b|^{2}ds dt \leq C,
\end{eqnarray*}
then $\eta \in L^{2}(0 , T , V)$ and
\begin{eqnarray}\label{6.6}
u_{g_{op_{h}h}} \tow \eta \quad in \quad L^{2}(0 , T , V) \quad weak \mbox{ and in }  L^{\infty}(0 , T ,  H)
 \mbox{ weak star }
\end{eqnarray}
\begin{eqnarray}\label{6.7}
 u_{g_{op_{h}h}} \to  b\quad in \quad L^{2}(0 , T , L^{2}(\Gamma_{1})) \quad strong,
\end{eqnarray}
so $\eta(t)\in K$ for all $t\in [0 , T]$.
Now taking $v\in K$  in (\ref{iv2}) where $u= u_{g_{op_{h}}h}$ and $g=g_{op_{h}}$ so
 \begin{eqnarray*}\label{}
 \langle \dot{u}_{g_{op_{h}h}}  , v- u_{g_{op_{h}h}}  \rangle +
 a_{h}(u_{g_{op_{h}h}}  , v- u_{g_{op_{h}h}} )
+\Phi(v)- \Phi(u_{g_{op_{h}h}})
\geq ( g_{op_{h}}  ,  v- u_{g_{op_{h}}h} )
\nonumber\\
+ h\int_{\Gamma_{1}} b(v- u_{g_{op_{h}}h} )ds, \quad a.e. \, t\in ]0 , T[
 \end{eqnarray*}
as $v\in K$ so $v=b$ on $\Gamma_{1}$, thus we have
 \begin{eqnarray*}\label{}
 \langle \dot{u}_{g_{op_{h}h}}  , u_{g_{op_{h}h}} -v  \rangle + a(u_{g_{op_{h}h}} , u_{g_{op_{h}h}} -v) +
 h \int_{\Gamma_{1}}|u_{g_{op_{h}h}}-b|^{2}ds  + \Phi(u_{g_{op_{h}h}})
-\Phi(v)  \nonumber\\ \leq
 \langle
-( g_{op_{h}}  ,  v- u_{g_{op_{h}h}})
 \quad a.e. \, t\in ]0 , T[.
 \end{eqnarray*}
Thus
\begin{eqnarray*}\label{}
\langle \dot{u}_{g_{op_{h}h}}  , u_{g_{op_{h}h}} -v \rangle + a(u_{g_{op_{h}h} }, u_{g_{op_{h}h}} -v)
 + \Phi(u_{g_{op_{h}h}}) -\Phi(v)
\leq
-( g_{op_{h}}  , v- u_{g_{op_{h}h}}) \nonumber\\
 \quad a.e. \, t\in ]0 , T[.
 \end{eqnarray*}
Using (\ref{6.5}) and (\ref{6.6}) and the same arguments as in (\ref{i1})- (\ref{e1}),  we deduce that
\begin{eqnarray*}\label{}
 \langle \dot{\eta} , v - \eta \rangle +  a(\eta , v- \eta)
 +\Phi(v)-  \Phi(\eta)
 \geq  (f ,  v -\eta), \quad \forall v\in K, \quad a.e.  \, t\in]0 , T[,
 \end{eqnarray*}
so also by the uniqueness of the solution of (\ref{eq1}) we obtain that
\begin{eqnarray}\label{xi}
u_{f}= \eta.
 \end{eqnarray}
 We prove that  $f= g_{op}$.  Indeed we have
 \begin{eqnarray*}\label{}
  J(f)&=&{1\over 2} \|\eta \|_{L^{2}(0 , T; H)}^{2} + {M\over 2} \|f\|_{L^{2}(0 , T; H)}^{2}
\nonumber\\
&\leq& \liminf_{h\to +\infty} \left\{{1\over 2} \|u_{g_{op_{h}}h}\|_{L^{2}(0 , T; H)}^{2} + {M\over 2} \|g_{op_{h}}\|_{L^{2}(0 , T; H)}^{2} \right\}
=\liminf_{h\to +\infty} J_{h}(g_{op_{h}})
\nonumber\\
&\leq& \liminf_{h\to +\infty} J_{h}(g)
=\liminf_{h\to +\infty}
\left\{{1\over 2} \|u_{g_{h}}\|_{L^{2}(0 , T; H)}^{2} + {M\over 2} \|g\|_{L^{2}(0 , T; H)}^{2} \right\}
 \end{eqnarray*}
using now the strong convergence $u_{g_{h}}\to u_{g}$ as $h\to
+\infty,\; \forall \; g\in H$ (see Lemma \ref{l6.1}), we obtain that
 \begin{eqnarray}\label{6.9}
J(f)\leq \liminf_{h\to +\infty} J_{h}(g_{op_{h}}) \leq
 {1\over 2} \|u_{g}\|_{L^{2}(0 , T; H)}^{2} + {M\over 2} \|g\|_{L^{2}(0 , T; H)}^{2}= J(g),
\quad \forall g\in L^{2}(0 , T; H)
 \end{eqnarray}
then by the uniqueness of the optimal control problem (\ref{P}) we get
 \begin{eqnarray}\label{f}
f= g_{op}.
 \end{eqnarray}
Now we prove the  strong convergence of  $u_{g_{op_{h}}h}$ to $\eta=u_{f}$ in
$L^{2}(0 , T , V)\cap L^{\infty}(0 , T , H)\cap L^{2}(0 , T , L^{2}(\Gamma_{1}))$,
 indeed taking $v=\eta$ in (\ref{iv2}) where $u=u_{g_{op_{h}}h}$ and $g= g_{op_{h}}$,
as $\eta(t)\in K$ for $t\in [0 , T]$,  so $\eta=b$ on $\Gamma_{1}$, we obtain
we get
 \begin{eqnarray*}\label{}
\langle \dot{u}_{g_{op_{h}h}}- \dot{\eta} ,   u_{g_{op_{h}h}}-\eta   \rangle +
a_{1}( u_{g_{op_{h}h}} -\eta ,  u_{g_{op_{h}h}}- \eta )
+(h-1) \int_{\Gamma_{1}}|u_{g_{op_{h}h}} - \eta|^{2}ds
\nonumber\\+ \Phi(u_{g_{op_{h}h}})- \Phi(\eta)
\leq  (g_{op_{h}}  ,  u_{g_{op_{h}h}}-\eta )
+ \langle \dot{\eta} ,   u_{g_{op_{h}h}}-\eta   \rangle
+ a(\eta ,  u_{g_{op_{h}h}}- \eta )
 \end{eqnarray*}
thus
\begin{eqnarray*}
{1\over 2} \|u_{g_{op_{h}h}} -\eta\|_{L^{\infty}(0 , T; H)}^{2} + \lambda_{1}\|u_{g_{op_{h}h}}
-\eta\|_{L^{2}(0 , T , V)}^{2}
\nonumber\\
+\int_{0}^{T} \{\Phi(u_{g_{op_{h}h}})- \Phi(\eta)\} dt
+(h-1) \|u_{g_{op_{h}h}} - \eta\|_{L^{2}(0 , T , L^{2}(\Gamma_{1}))} ^{2}
\nonumber\\
\leq \int_{0}^{T}(g_{op_{h}}(t) , u_{g_{op_{h}h}}(t)-\eta(t))dt
+ \int_{0}^{T}\langle \dot{\eta} ,   u_{g_{op_{h}h}}-\eta   \rangle dt
\nonumber\\
+ \int_{0}^{T}a(\eta(t) , \eta(t) - u_{g_{op_{h}h}}(t))dt.
 \end{eqnarray*}
Using (\ref{6.6}) and the weak semi-continuity of $\Phi$ we deduce that
\begin{eqnarray*}\label{}
\lim_{h\to +\infty}\|u_{g_{op_{h}h}} -\eta\|_{L^{\infty}(0 , T; H)}
&=&\lim_{h\to +\infty}\|u_{g_{op_{h}}h} -\eta\|_{L^{2}(0 , T , V)} \nonumber\\
&=& \|u_{g_{op_{h}h}} - \eta\|_{L^{2}(0 , T , L^{2}(\Gamma_{1}))} = 0,
 \end{eqnarray*}
and with (\ref{xi}) and (\ref{f}) we deduce (\ref{6.1}).
As $f\in L^{2}(0 , T , H)$, then from  (\ref{6.9}) with $g=f$ and (\ref{f}) we can write
\begin{eqnarray}\label{eq6.11}
 J(f)&=& J(g_{op}) ={1\over 2}\|u_{g_{op}}\|_{L^{2}(0 , T , H)}^{2} + {M\over 2}\|g_{op}\|_{L^{2}(0 , T , H)}^{2}\nonumber\\
&\leq& \liminf_{h\to+\infty} J_{h}(g_{op_{h}})
= \liminf_{h\to+\infty} \left\{{1\over 2}\|u_{g_{op_{h}h}}\|_{L^{2}(0 , T , H)}^{2} + {M\over 2}\|g_{op_{h}}\|_{L^{2}(0 , T , H)}^{2}\right\}\nonumber\\
&\leq&  \lim_{h\to+\infty} J_{h}(g_{op}) = J((g_{op})
\end{eqnarray}
and using  the strong convergence (\ref{6.1}), we get
\begin{eqnarray}\label{eq6.13}
\lim_{h\to+\infty}\|g_{op_{h}}\|_{L^{2}(0 , T , H)}= \|g_{op}\|_{L^{2}(0 , T , H)}.
\end{eqnarray}
Finally as
\begin{eqnarray}\label{eq6.14}
\|g_{op_{h}}- g_{op}\|_{L^{2}(0 , T; H)}^{2} =
 \|g_{op_{h}}\|_{L^{2}(0 , T; H)}^{2}+ \|g_{op}\|_{L^{2}(0 , T; H)}^{2}
-2(g_{op_{h}} , g_{op})
\end{eqnarray}
and by  the first part of (\ref{6.5}) we have
$$\lim_{h\to +\infty}\left(g_{op_{h}} , g_{op}\right) = \|g_{op}\|_{L^{2}(0 , T , H)}^{2},$$
so from (\ref{eq6.13}) and (\ref{eq6.14}) we  get \mbox{(\ref{6.2})}. This ends  the proof.
\end{proof}

\noindent{\bf Acknowledgements:}
 This work was realized while the
second author was a visitor at Saint Etienne University
(France) and he is grateful to this institution for its hospitality.


\end{document}